\newfont{\msbm}{msbm10 at 11pt}
\newcommand\QED{\pushQED{\qed} 
\qedhere
\popQED}
\newcommand\Quote[1]{``#1"}
\newcommand {\PP} {\mathbb{P}}
\def\eps{\varepsilon}
\newtheorem{Theo}{Theorem}
\newtheorem{Rmk}{Remark}
\newtheorem{Lem}{Lemma}
\begin{document}

\title{A Spatial Mutation Model with Increasing Mutation Rates}
\author{Brian Chao and Jason Schweinsberg \\ University of California San Diego}
\maketitle
\begin{abstract}
We consider a spatial model of cancer in which cells are points on the $d$-dimensional torus $\mathcal{T}=[0,L]^d$, and each cell with $k-1$ mutations acquires a $k$th mutation at rate $\mu_k$. We will assume that the mutation rates $\mu_k$ are increasing, and we find the asymptotic waiting time for the first cell to acquire $k$ mutations as the torus volume tends to infinity. This paper generalizes results on waiting for $k\geq 3$ mutations by Foo, Leder, and Schweinsberg \cite{fls}, who considered the case in which all of the mutation rates $\mu_k$ were the same. In addition, we find the limiting distribution of the spatial distances between mutations for certain values of the mutation rates.
\end{abstract}

{\small {{\it AMS 2020 subject classifications}:  Primary 60J99;
Secondary 60G55, 92D15, 92D25}

{{\it Key words and phrases}: mutation, cancer, spatial population model}}

\section{Introduction} 

Cancer is often caused by genetic mutations which disrupt regular cell division and apoptosis, in which case cancerous cells divide too rapidly and healthy cells reproduce too slowly. This can happen, for example, as soon as several distinct mutations occur and dramatically disrupt cell function. Thus, it is sometimes reasonable to model cancer as occurring after $k$ distinct mutations appear in sequence within a large body.

Mathematical models in which cancer occurs once some cell acquires $k$ mutations date back to the famous 1954 paper by Armitage and Doll \cite{armdoll}.  Armitage and Doll proposed a multi-stage model of carcinogenesis in which, once a cell has acquired $k-1$ mutations, it acquires a $k$th mutation at rate $\mu_k$.  In this model, the probability of acquiring the $k$th mutation during a small time interval $(t,t+dt)$ is
$$\frac{\mu_1\mu_2\cdots\mu_kt^{k-1}}{(k-1)!}dt.$$
That is, the incidence rate of the $k$th mutation (at which point the individual becomes cancerous) is proportional to $\mu_1\mu_2\cdots\mu_k t^{k-1}$. This means that cancer risk is proportional to both the mutation rates and the $(k-1)$th power of age.  More sophisticated models, taking into account the possibilities of cell division and cell death, were later analyzed in \cite{dm11, dm10, dss, imkn, imn, ksn, ml90, ml92, sch08}.

To model some types of cancer, it is important also to include spatial structure in the model.  In 1972, Williams and Bjerknes \cite{wb72} introduced a spatial model of skin cancer, now known as the biased voter model.  At each site on a lattice, there is an associated binary state indicating whether the site is cancerous or healthy.  Each cell divides at a certain rate, and when cell division occurs, the daughter cell replaces one of the neighboring cells chosen at random.  The model is biased in that a cancerous cell spreads $\kappa>1$ times as fast as a healthy cell.  Williams and Bjerknes \cite{wb72} presented computer simulations for this model, and the model was later analyzed mathematically by Bramson and Griffeath \cite{bg1, bg2}.

More recently, Durrett, Foo, and Leder \cite{dfl16}, building on earlier work in \cite{dm15, k06}, studied a spatial Moran model which is a generalization of the biased voter model.  Cells are modelled as points of the discrete torus $(\mathbb{Z}\text{ mod }L)^d$, and each cell is of type $i\in\mathbb{N}\cup\{0\}$.  A cell of type $i-1$ mutates to type $i$ at rate $\mu_i$.  Type $i$ cells have fitness level $(1+s)^i$, where $s>0$ measures the selective advantage of one cell over its predecessors.  Each cell divides at a rate proportional to its fitness, and then, as in the biased voter model, the daughter cell replaces a randomly chosen neighboring cell.  The authors considered the question of how long it takes for some type 2 cell to appear.  To simplify the analysis, they introduced a continuous model where cells live inside the torus $[0,L]^d$. This continuous stochastic model approximates the biased voter model because of the Bramson-Griffeath shape theorem \cite{bg1,bg2}, which implies that the cluster of cells in $\mathbb{Z}^d$ with a particular mutation grows to the shape of a convex subset of $\mathbb{R}^d$.  In Section 4 of \cite{dfl16}, the authors used the continuous model to compute the distribution of the time that the first type 2 cell appears, under certain assumptions on the mutation rates. 

We describe here in more detail this continuous approximation to the biased voter model.  The spread of cancer is modeled on the $d$-dimensional torus $\mathcal{T}:=[0,L]^d$, where the points $0$ and $L$ are identified.  Note that this is the continuous analog of the space $(\mathbb{Z}\text{ mod }L)^d$ considered in \cite{dfl16}.  We write $N:=L^d$ to denote the volume of $\mathcal{T}$.  Each point in $\mathcal{T}$ is assigned a type, indicating the number of mutations the cell has acquired.  At the initial time $t=0$, all points in $\mathcal{T}$ are type~$0$, meaning they have no mutations.  A so-called type $1$ mutation then occurs at rate $\mu_1$ per unit volume. Once each type $1$ mutation appears, it spreads out in a ball at rate $\alpha$ per unit time.  This means that $t$ time units after a mutation appears, all points within a distance $\alpha t$ of the site where the mutation occurred will have acquired the mutation.  Type $1$ points then acquire a type $2$ mutation at rate $\mu_2$ per unit volume, and this process continues indefinitely.  In general, type $k$ mutations overtake type $k-1$ mutations at rate $\mu_k$ per unit volume, and each type $k$ mutation then spreads outward in a ball at rate $\alpha$ per unit time.  A full mathematical construction of this process, starting from Poisson point processes which govern the mutations, is given at the beginning of section \ref{waitkproofs}.  

Let $\sigma_k$ denote the first time that some cell becomes type $k$.  Foo, Leder, and Schweinsberg \cite{fls} obtained the asymptotic distribution of $\sigma_2$ under a wide range of values for the parameters $\alpha$, $\mu_1$, and $\mu_2$, extending the results in \cite{dfl16}.  They also found the asymptotic distribution of $\sigma_k$ for $k \geq 3$ assuming equal mutation rates $\mu_i=\mu$ for all $i$.  In this paper, we will further generalize the results in \cite{fls} for $k \geq 3$ by considering the case where the mutation rates are increasing.  We will see that several qualitatively different types of behavior are possible, depending on how fast the mutation rates increase.

We mention two biological justifications for assuming increasing mutation rates. Loeb and Loeb \cite{ll00} suggested a general phenomenon in carcinogenesis where there is favorable selection for certain mutations that promote tumor growth in genes responsible for repairing DNA damage. The increasing genetic instability disrupting DNA repair, in the context of the present paper, would correspond to increasing mutation rates. Also, our model would be of interest in the situation described by Prindle, Fox, and Loeb \cite{pfl10}, who hypothesize that cancer cells express a mutator phenotype, which causes cells to mutate at a much higher rate.  They propose targeting the mutator phenotype as part of cancer therapy, possibly with the goal of further increasing the mutation rate to the point where the mutations incapacitate or kill malignant cells.


As in \cite{fls}, we will assume that the rate of mutation spread $\alpha$ is constant across mutation types, so that successive mutations have equal selective advantage. One possible generalization of our model would be to allow each type $i$ mutation to have a different rate of spread $\alpha_i$.  However, this more general model is nontrivial even to formulate unless $(\alpha_i)_{i=1}^{\infty}$ is decreasing because if $\alpha_{i+1}>\alpha_{i}$, then regions of type $i+1$ could completely swallow the surrounding type $i$ region.  Consequently, it would be necessary to model what happens not only when mutations of types $i+1$ and $i$ compete, but also how mutations of types $i+1$ and $j\in \{1,...,i-1\}$ compete.  We do not pursue this generalization here.

After computing the limiting distribution of $\sigma_k$, we also find the limiting distribution of the distances between the first mutation of type $i$ and the first mutation of type $j$, where $i < j$.
The distribution of distances between mutations is relevant in studying a phenomenon known as the \Quote{cancer field effect}, which refers to the increased risk for certain regions to acquire primary tumors. These regions are called premalignant fields, and they have a high risk of becoming malignant despite appearing to be normal \cite{flr}. The size of the premalignant field is clinically relevant when a patient is diagnosed with cancer, because it will determine the area of tissue to be surgically removed, in order to avoid cancer recurrence. Surgical removal of premalignant fields, put in the context of this paper, is akin to removing the region with at least $i$ mutations once the first type $j>i$ mutation appears.  Foo, Leder, and Ryser \cite{flr} considered the case in which $i = 1$ and $j = 2$, and they characterized the sizes of premalignant fields conditioned on $\{\sigma_2=t\}$, in $d\in \{1,2,3\}$ spatial dimensions.  These ideas were applied to head and neck cancer in \cite{rlrlf16}.

We note that the model that we are studying in this paper independently appeared in the statistical physics literature, where it is known as the polynuclear growth model.  It has been studied most extensively in $d = 1$ when all of the $\mu_k$ are the same \cite{ps00a, ps00b, bfs08}, but the model was also formulated in higher dimensions in \cite{ps02}.  Most of this work in the statistical physics literature focuses on the long-run growth properties of the surface, and detailed information about the fluctuations has been established when $d = 1$.  This is quite different from our goal of understanding the time to acquire a fixed number of mutations.

In Section 2, we introduce some basic notation and state our main results as well as some heuristics explaining why these results are true.  In Section \ref{waitkproofs}, we prove the limit theorems regarding the time to wait for $k$ mutations, and in Section \ref{distanceproof}, we prove the limit theorems for the distances between mutations.

\section{Main results and heuristics}\label{waitingresults}

We first introduce some notation that we will need before stating the results.
Given two sequences of nonnegative real numbers $(a_N)_{N=1}^{\infty}$ and $(b_N)_{N=1}^{\infty}$, we write:
\begin{enumerate}
    \item $a_N\sim b_N$ if $\lim_{N\to\infty}a_N/b_N=1$;
    \item $a_N\ll b_N$ if $\displaystyle \lim_{N\to \infty}a_N/b_N=0$ and $a_N\gg b_N$ if $\lim_{N\to\infty}a_N/b_N=\infty$;
    \item $a_N\asymp b_N$ if $\displaystyle 0<\liminf_{N\to \infty}a_N/b_N\leq \limsup_{N\to \infty}a_N/b_N<\infty$;
    \item $a_N\lesssim b_N$ if $\displaystyle \limsup_{N\to\infty} a_N/b_N<\infty$.
\end{enumerate}
 We also define the following notation:
\begin{enumerate}
    \item[a.] If $X_N$ converges to $X$ in distribution, we write $X_N\Rightarrow X$.
    \item[b.] If $X_N$ converges to $X$ in probability, we write $X_N\to_p X$.
    \item[c.] $\gamma_d$ denotes the volume of the unit ball in $\mathbb{R}^d$.
    \item[d.] For each $k\geq 1$ and $j \geq 1$, we define
    \begin{align}
        \label{betakay}
        \beta_k:=\Big(N\alpha^{(k-1)d}\prod_{i=1}^{k}\mu_i\Big)^{-1/((k-1)d+k)} \text{ and }\kappa_j:=(\mu_j\alpha^d)^{-1/(d+1)}.
    \end{align} 
    We will explain how $\beta_k$ and $\kappa_j$ arise in Sections 2.3 and 2.5, respectively.
    \item[e.] $\sigma_k$ denotes the first time a mutation of type $k$ appears, and $\sigma_k^{(2)}$ denotes the second time a mutation of type $k$ appears. More rigorous definitions of $\sigma_k$ and $\sigma_k^{(2)}$ are given in Sections~\ref{waitkproofs} and \ref{distanceproof}, respectively. 
\end{enumerate}
All limits in this paper will be taken as $N \rightarrow \infty$.
The mutation rates $(\mu_i)_{i=1}^{\infty}$ and the rate of mutation spread $\alpha$ will depend on $N$, even though this dependence is not recorded in the notation. Throughout this paper we will assume that the mutation rates $(\mu_i)_{i=1}^{\infty}$ are asymptotically increasing, i.e.
\begin{align}
    \label{increasing}
    \mu_1\lesssim \mu_2\lesssim \mu_3\lesssim \cdots
\end{align}

\subsection{Theorem 1: low mutation rates} Assume
$$\displaystyle \mu_1\ll \frac{\alpha}{N^{(d+1)/d}}\text{ and }  \frac{\mu_i}{\mu_1}\to c_i\in (0,\infty] \text{ for all }i\in \{1,...,k\}.$$
The first time a mutation of type $1$ appears is exponentially distributed with rate $N\mu_1$. The maximal distance between any two points on the torus $\mathcal{T}=[0,L]^d$ is $\sqrt{d}L/2$. Also note that $L=N^{1/d}$ where $N$ is the volume of $\mathcal{T}$. Consequently, once the first type 1 mutation appears, it will spread to the entire torus in time $\sqrt{d}L/(2\alpha)=\sqrt{d}N^{1/d}/(2\alpha)$. Hence, as noted in \cite{fls}, the time required for a type $1$ mutation to fixate once it has first appeared is much shorter than $\sigma_1$ precisely when $N^{1/d}/\alpha\ll 1/(N\mu_1)$, which is equivalent to $\mu_1\ll \alpha/N^{(d+1)/d}$.

Now because of the second assumption $\mu_i/\mu_1\to c_i \in (0,\infty]$, mutations of types $i\in \{2,...,k\}$ appear at least as fast as the first mutation. If $c_i<\infty$, then the waiting times  $\sigma_1$ and $\sigma_i-\sigma_{i-1}$ are on the same order of magnitude. Because we have $\sigma_1\sim \text{Exponential}(N\mu_1c_1)$, it follows $\sigma_i-\sigma_{i-1}$ is also exponentially distributed and that $\sigma_i-\sigma_{i-1}\sim\text{Exponential}(N\mu_1c_i)$. Otherwise, if $c_i=\infty$, then the first type $i$ mutation appears so quickly that its waiting time $\sigma_i-\sigma_{i-1}$ is negligible as $N\to\infty$. Putting everything together gives us the following theorem.  This result is a very slight generalization of Theorem 1 in \cite{fls}, and is proved by the same method.

\begin{Theo}
Suppose (\ref{increasing}) holds and $\mu_1 \ll \alpha/N^{(d+1)/d}$. Suppose that for all $i\in \{1,...,k\}$, we have
$$\frac{\mu_i}{\mu_1}\to c_i\in (0,\infty].$$
Let $W_1,...,W_k$ be independent random variables with $W_i\sim \text{Exponential}(c_i)$ if $c_i<\infty$ and $W_i=0$ if $c_i=\infty$. Then
$$N\mu_1\sigma_k\Rightarrow W_1+\cdots+W_k.$$
\end{Theo}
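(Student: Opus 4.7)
The plan is to compute the limiting Laplace transform of $N\mu_1\sigma_k$ by sequentially conditioning on $\mathcal{F}_{\sigma_{i-1}}$ and extracting one exponential factor at a time. Write $T_i := \sigma_i - \sigma_{i-1}$ (with $\sigma_0 := 0$), so that $\sigma_k = T_1 + \cdots + T_k$. For $\lambda > 0$,
$$
\E\big[e^{-\lambda N\mu_1 \sigma_k}\big] = \E\Big[\,e^{-\lambda N\mu_1(T_1+\cdots+T_{k-1})}\;\E\big[e^{-\lambda N\mu_1 T_k}\bigm| \mathcal{F}_{\sigma_{k-1}}\big]\,\Big],
$$
so the theorem will follow by induction once I show that each inner conditional Laplace transform converges uniformly in $\omega$ to $\E[e^{-\lambda W_i}]$, after which L\'evy's continuity theorem closes the argument.

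The key input is a deterministic sandwich for the conditional survival function of $T_i$, $i \geq 2$. Given $\mathcal{F}_{\sigma_{i-1}}$, the type-$i$ mutations form the accepted subset of a background Poisson process on $\mathcal{T}\times[\sigma_{i-1},\infty)$ with intensity $\mu_i\,dx\,dt$, where $(x,t)$ is accepted iff $x$ is of type $i-1$ at time $t^-$. On the event $\{T_i > u\}$ the type-$(i-1)$ region at time $\sigma_{i-1}+u$ contains the ball $B(y_0,\alpha u)\cap\mathcal{T}$ around the location $y_0$ of the first type-$(i-1)$ cell, because that ball spreads at rate $\alpha$ and overtakes every cell of strictly lower type in its path, while no type-$i$ cell yet exists. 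Hence any background Poisson point inside the space-time cone $A(s) := \{(x,\sigma_{i-1}+u) : 0 \leq u \leq s,\ x \in B(y_0,\alpha u)\cap\mathcal{T}\}$ forces $T_i \leq s$, and the deterministic (by translation invariance) volume $V^*(u)$ of a ball of radius $\alpha u$ on $\mathcal{T}$ satisfies $V^*(u) = N$ for $u \geq T_{\mathrm{fix}} := \sqrt{d}N^{1/d}/(2\alpha)$. Combining this with the trivial rate bound $\mu_i V_{i-1}(t)\leq \mu_i N$ for the lower bound gives
$$
e^{-\mu_i N s} \;\leq\; \PP\big(T_i > s \bigm| \mathcal{F}_{\sigma_{i-1}}\big) \;\leq\; \exp\Big(-\mu_i\!\!\int_0^s V^*(u)\,du\Big) \;\leq\; \exp\big(-\mu_i N (s-T_{\mathrm{fix}})^+\big).
$$
(The case $i = 1$ is trivial since $\sigma_1 \sim \text{Exponential}(N\mu_1)$.)

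Substituting $s = s_N := t/(N\mu_1)$ and invoking the hypothesis $\mu_1 \ll \alpha/N^{(d+1)/d}$ — equivalently $N\mu_1 T_{\mathrm{fix}} \to 0$ — both outer bounds equal $\exp(-(\mu_i/\mu_1)\, t\,(1+o(1)))$, converging to $e^{-c_i t}$ if $c_i < \infty$ and to $0$ if $c_i = \infty$; since the bounds are deterministic, the middle term converges uniformly in $\omega$. Applying the identity $\E[e^{-\lambda X}] = \int_0^\infty \lambda e^{-\lambda t}\,\PP(X > t)\,dt$ together with dominated convergence yields the desired uniform-in-$\omega$ limit
$$
\E\big[e^{-\lambda N\mu_1 T_i}\bigm| \mathcal{F}_{\sigma_{i-1}}\big] \;\longrightarrow\; \E\big[e^{-\lambda W_i}\big].
$$
Iterating in the tower identity above and using the inductive hypothesis on the outer factor gives $\E[e^{-\lambda N\mu_1 \sigma_k}] \to \prod_{i=1}^k \E[e^{-\lambda W_i}] = \E[e^{-\lambda(W_1+\cdots+W_k)}]$, from which the theorem follows by L\'evy's continuity theorem. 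The only real technical content is the ball-containment step underlying the sandwich; everything else is Poisson-process bookkeeping, and the argument is the one used in the proof of Theorem~1 of \cite{fls}, modified only to carry the possibly unequal ratios $\mu_i/\mu_1 \to c_i$ in place of a common ratio $1$.
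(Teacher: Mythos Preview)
Your proof is correct, but it takes a different route from the paper's. The paper does not argue with Laplace transforms or a conditional sandwich at all: it first sets $r:=\max\{j\le k:\mu_j\lesssim\mu_1\}$, invokes Theorem~A (Theorem~1 of \cite{fls}) as a black box to get $N\mu_1\sigma_r\Rightarrow W_1+\cdots+W_r$, and then shows separately that $N\mu_1(\sigma_k-\sigma_r)\to_p 0$ by bounding each $\sigma_{j+1}-\sigma_j$ by the fixation time $\sqrt{d}N^{1/d}/(2\alpha)$ plus an $\text{Exponential}(N\mu_{j+1})$ variable. Your argument instead gives a unified, self-contained treatment: the deterministic sandwich $e^{-\mu_iNs}\le\PP(T_i>s\mid\mathcal{F}_{\sigma_{i-1}})\le e^{-\mu_iN(s-T_{\mathrm{fix}})^+}$ handles both the $c_i<\infty$ and $c_i=\infty$ cases at once, and the uniform (indeed deterministic) convergence of the bounds lets you peel off the factors in the tower identity cleanly. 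What you gain is independence from the external result and a single mechanism for all indices; what the paper's approach gains is modularity, reusing \cite{fls} rather than reproving its content. Your closing remark that ``the argument is the one used in the proof of Theorem~1 of \cite{fls}'' may describe the proof \emph{in} \cite{fls}, but it does not describe the proof in the present paper, which proceeds by reduction to Theorem~A plus a separate tail estimate.
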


Figure 1 below illustrates that once a type $i$ mutation appears, it quickly fills up the whole torus, and then a type $i+1$ mutation occurs. 
\begin{center}
\includegraphics[scale=0.33]{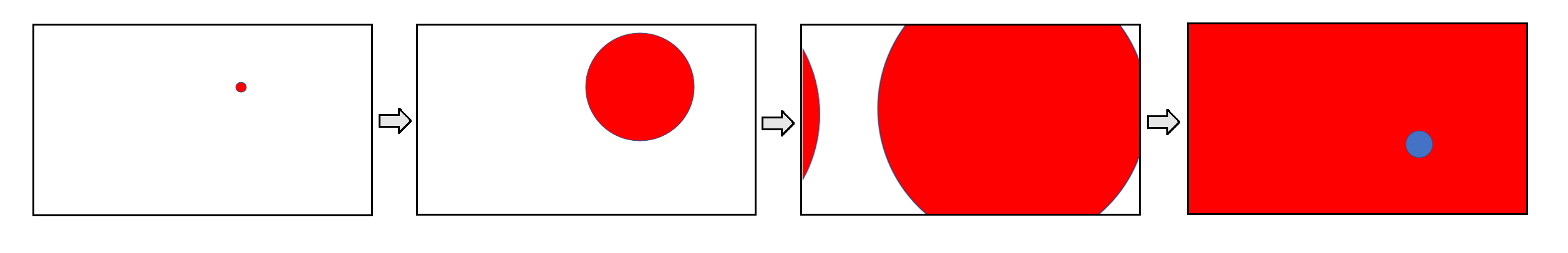}
\\ Figure 1. The transition from type $i$ mutations (in red) to type $i+1$ mutations (in blue). 
\end{center}

\subsection{Theorem 2: type $j\geq 2$ mutations occur rapidly after $\sigma_1$} 

Assume
\begin{equation}\label{asmthm2}
\displaystyle \mu_1\gg \frac{\alpha}{N^{(d+1)/d}} \text{ and }\displaystyle \mu_2\gg \frac{(N\mu_1)^{d+1}}{\alpha^d}.
\end{equation} 
In contrast to Theorem 1, the assumption $\mu_1\gg \alpha/N^{(d+1)/d}$ means that the time it takes for type~$1$ mutations to spread to the entire torus is much longer than $\sigma_1$.  As a result, there will be many growing balls of type 1 mutations before any of these balls can fill the entire torus.  However, if mutations of types $2, 3, \dots, k$ appear quickly after the first type 1 mutation appears, then the time to wait for the first type $k$ mutation will be close to the time to wait for the first type 1 mutation.  We consider here the conditions under which this will be the case.

First consider the ball of type 1 cells resulting from the initial type 1 mutation at time $\sigma_1$.  Assuming $t$ is small enough that, by time $\sigma_1 + t$, the ball has not started overlapping itself by wrapping around the torus, the ball will have volume $\gamma_d (\alpha t)^d$ at time $t$.  Then the probability that the first type $2$ mutation appears in that ball before time $t$ is
\begin{align}
    \label{singleball}
    1-\exp\Big(-\int_0^{t}\mu_2\gamma_d(\alpha r)^d dr \Big)=1-\exp\Big(-\frac{\gamma_d}{d+1}\mu_2\alpha^d t^{d+1}\Big).
\end{align}
It follows that the first time a type 2 mutation occurs in this ball is on the order of $(\mu_2\alpha^d)^{-1/(d+1)}$. 
Hence, whenever $(\mu_2\alpha^d)^{-1/(d+1)}\ll 1/(N\mu_1)$, which is equivalent to the second assumption in (\ref{asmthm2}), it follows that $\sigma_2-\sigma_1$ is much quicker than $\sigma_1$. From this heuristic, we see that $N\mu_1(\sigma_2-\sigma_1)\to_p 0$. 
Repeating this reasoning with types $j-1$ and $j$ in place of types $1$ and $2$, we see that $\sigma_j-\sigma_{j-1}$ is much quicker than $\sigma_1$ when $(\mu_j\alpha^d)^{-1/(d+1)}\ll 1/(N\mu_1)$, or equivalently $\mu_j\gg (N\mu_1)^{d+1}/\alpha^d$. However, this follows from the second assumption in (\ref{asmthm2}) because of (\ref{increasing}). Hence, we also have $N\mu_1(\sigma_j-\sigma_{j-1})\to_p 0$.  
Putting everything together, when $N$ is large,
$$N\mu_1\sigma_k= N\mu_1\sigma_1+N\mu_1(\sigma_2-\sigma_1)+\cdots+N\mu_1(\sigma_k-\sigma_{k-1})\approx N\mu_1\sigma_1.$$
This gives us the following theorem.  We note that the $k=2$ case was proven by Durrett, Foo, and Leder in Theorem 3 of \cite{dfl16}.  They used essentially the same reasoning that is described above.

\begin{Theo}
\label{theorem2}
Suppose (\ref{increasing}) holds, and suppose $\mu_1\gg \alpha/N^{(d+1)/d}$ and $\mu_2\gg (N\mu_1)^{d+1}/\alpha^d$.  For all $k \geq 2$,
$$N\mu_1\sigma_k\Rightarrow W,$$
where $W\sim \text{Exponential}(1)$.
\end{Theo}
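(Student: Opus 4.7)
The plan is to write
\[
N\mu_1 \sigma_k = N\mu_1 \sigma_1 + \sum_{j=2}^{k} N\mu_1 (\sigma_j - \sigma_{j-1}),
\]
show that the first term converges in distribution to $W \sim \text{Exponential}(1)$ while each of the $k-1$ increments $N\mu_1(\sigma_j - \sigma_{j-1})$ converges to $0$ in probability, and conclude via Slutsky's theorem. This is a direct rigorization of the heuristic already given.

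For the first term, before time $\sigma_1$ the entire torus is type~$0$, so new type~$1$ mutations arrive according to a Poisson process of total rate $N\mu_1$; thus $\sigma_1 \sim \text{Exponential}(N\mu_1)$ and $N\mu_1 \sigma_1 \Rightarrow W$. For $j \in \{2,\dots,k\}$, I would fix $\eps > 0$ and work with the ball $B(t)$ of radius $\alpha t$ centered at the location of the first type~$j-1$ mutation. Since $B(t-\sigma_{j-1})$ is contained in the type~$j-1$ region at every time $t \in [\sigma_{j-1}, \sigma_j)$, if $T$ denotes the first time after $\sigma_{j-1}$ at which a type~$j$ mutation lands inside this growing ball, then $\sigma_j - \sigma_{j-1} \leq T - \sigma_{j-1}$. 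The hypothesis $\mu_1 \gg \alpha/N^{(d+1)/d}$ forces $\alpha \cdot \eps/(N\mu_1) \ll L$ as $N \to \infty$, so for large $N$ the ball $B$ does not wrap around the torus on the interval $[0, \eps/(N\mu_1)]$; combined with the strong Markov property at $\sigma_{j-1}$ and the Poisson-point-process construction from Section~\ref{waitkproofs}, this shows that type-$j$ mutations inside $B$ arrive as a Poisson process of spatial rate $\mu_j$ on a ball of volume $\gamma_d (\alpha t)^d$. Plugging $t = \eps/(N\mu_1)$ into the identity (\ref{singleball}) then yields
\[
\PP\bigl(N\mu_1(\sigma_j - \sigma_{j-1}) > \eps\bigr) \leq \exp\!\Bigl(-\tfrac{\gamma_d}{d+1}\, \mu_j \alpha^d \bigl(\eps/(N\mu_1)\bigr)^{d+1}\Bigr),
\]
which tends to $0$ because $\mu_j \gg (N\mu_1)^{d+1}/\alpha^d$: this is precisely the second hypothesis of (\ref{asmthm2}) when $j=2$, and for $j \geq 3$ it follows by combining (\ref{increasing}) with the $j=2$ case.

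I expect the principal obstacle to be routine but technical bookkeeping: justifying rigorously, from the Poisson-point-process construction in Section~\ref{waitkproofs}, that conditional on $\mathcal{F}_{\sigma_{j-1}}$ the type~$j$ mutations falling in the spacetime set $\{(x,s) : x \in B(s - \sigma_{j-1}),\ s \geq \sigma_{j-1}\}$ really do form a Poisson process of the advertised intensity independent of the past. Once this is in hand, combining the $k-1$ resulting estimates $N\mu_1(\sigma_j - \sigma_{j-1}) \to_p 0$ with the $\text{Exponential}(1)$ limit of $N\mu_1 \sigma_1$ via Slutsky's theorem finishes the proof.
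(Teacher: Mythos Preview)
Your proposal is correct and follows essentially the same approach as the paper: the same telescoping decomposition, the same $\text{Exponential}(1)$ law for $N\mu_1\sigma_1$, and the same lower bound $Y_{j-1}(s)\geq\gamma_d\alpha^d(s-\sigma_{j-1})^d$ coming from the single growing ball, leading to exactly the displayed exponential upper bound on $\PP(N\mu_1(\sigma_j-\sigma_{j-1})>\eps)$. The only cosmetic difference is that the paper packages the ``technical bookkeeping'' you flag into a short preliminary lemma (Lemma~\ref{computeexpectation}), which computes $\PP(\sigma_j>t_N)=\mathbb{E}\big[\exp\big(-\int_{\sigma_{j-1}}^{t_N}\mu_j Y_{j-1}(s)\,ds\big)\big]$ directly from the independence of $\Pi_j$ from $\sigma(\Pi_1,\dots,\Pi_{j-1})$, rather than phrasing it via the strong Markov property and an auxiliary stopping time $T$.
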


A pictorial representation is given in Figure 2, where the nested circles correspond to mutations of types $1,...,k$ for $k=4$.
\begin{center}
    \includegraphics[scale=0.4]{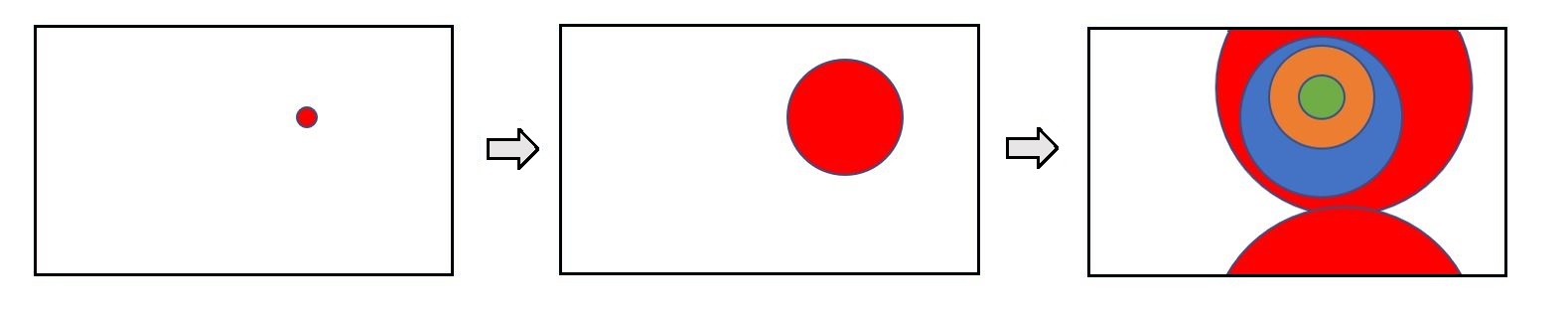}
    \\ Figure 2. Once the first type $1$ mutation (red) appears, the type 2 (blue), type 3 (orange), and type 4 (green) mutations all happen quickly.
\end{center}

\subsection{Theorem 3: type $j\in \{1,...,k-1\}$ mutations appear many times}
Assume
\begin{equation}\label{asmthm3}
\mu_1\gg \frac{\alpha}{N^{(d+1)/d}} \text{ and } \mu_k\ll \frac{1}{\alpha^d\beta^{d+1}_{k-1}}.
\end{equation}
As in Theorem 2, the first assumption ensures that $\sigma_1$ is shorter than the time it takes for type 1 mutations to fixate once they appear. The second assumption ensures that all mutations of types up to $k$ do not appear too quickly, so that we are not in the setting of Theorem 2.  In particular, note that when $k=2$, we have $\beta_{k-1}=(N\mu_1)^{-1}$, and the second assumption reduces to $\mu_2\ll (N\mu_1)^{d+1}/\alpha^d$.  When (\ref{asmthm3}) holds, for $j \in \{2, \dots, k\}$, there will be many small balls of type $j-1$ before any type $j$ mutation appears.  In this case, we will be able to use a ``law of large numbers" established in \cite{fls} to approximate the total volume of type $j-1$ regions with its expectation.


To explain what happens in this case, we review a derivation from \cite{fls}. We want to define an approximation $v_j(t)$ to the total volume of regions with at least $j$ mutations at time $t$. We set $v_0(t)\equiv N$. Next, let $t>0$. For times $r\in [0,t]$, type $j$ mutations occur at rate $\mu_jv_{j-1}(r)$, and these type $j$ mutations each grow into a ball of size $\gamma_d(\alpha(t-r))^d$ by time $t$. Therefore, we define
\begin{equation}\label{informal}
v_j(t) =\int_{0}^{t}\mu_jv_{j-1}(r)\gamma_d(\alpha(t-r))^d dr.
\end{equation}
Note that (\ref{informal}) gives a good approximation to the volume of the type $j$ region because we have many mostly non-overlapping balls of type $j$.
In \cite{fls} it is shown using induction that
$$v_j(t)=\frac{\gamma_d^j(d!)^j}{(j(d+1))!}\Big(\prod_{i=1}^{j}\mu_i\Big)N\alpha^{jd}t^{j(d+1)},$$
which gives us the approximation
$$\mathbb{P}(\sigma_k>t)\approx \exp\Big(-\int_{0}^{t}\mu_kv_{k-1}(r)dr\Big)=\exp\Big(-\frac{\gamma_d^{k-1}(d!)^{k-1}}{((k-1)d+k)!}\Big(\prod_{i=1}^{k}\mu_i\Big)N\alpha^{(k-1)d}t^{(k-1)d+k}\Big).$$
It will follow that if we define $\beta_k$ as in (\ref{betakay}),
then we have the following result.

\begin{Theo}
\label{theorem3}
Suppose (\ref{increasing}) holds.  Let $k\geq 2$, and suppose $\mu_1\gg \alpha/N^{(d+1)/d}$ and $\mu_k\ll 1/(\alpha^d\beta_{k-1}^{d+1})$.
Then for $t>0$, $$\mathbb{P}(\sigma_k>\beta_kt)\to \exp\Big(-\frac{\gamma_d^{k-1}(d!)^{k-1}}{((k-1)d+k)!}t^{(k-1)d+k}\Big).$$
\end{Theo}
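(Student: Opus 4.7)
The plan is to convert the heuristic in Section 2.3 into a rigorous argument by showing that the actual random volume $A_j(s)$ of the region with at least $j$ mutations at time $s$ concentrates around the deterministic approximation $v_j(s)$ in probability, for each $j\in\{1,\dots,k-1\}$ and uniformly on time intervals of the form $[0,T\beta_k]$. Once this concentration is in hand, the Poisson point process construction of the model allows one to write, conditional on the sigma-algebra $\mathcal{F}$ generated by the arrival processes driving mutations of types $1,\dots,k-1$,
$$\mathbb{P}(\sigma_k > \beta_k t \mid \mathcal{F}) = \exp\Bigl(-\int_0^{\beta_k t} \mu_k A_{k-1}(s)\,ds\Bigr).$$
If the random integral on the right converges in probability to the deterministic integral $\int_0^{\beta_k t} \mu_k v_{k-1}(s)\,ds$, then bounded convergence applied to the exponential yields the theorem. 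A direct calculation using the explicit formula for $v_{k-1}$ together with the identity $\beta_k^{(k-1)d+k} = (N\alpha^{(k-1)d}\prod_{i=1}^k\mu_i)^{-1}$ reduces this deterministic integral to $\gamma_d^{k-1}(d!)^{k-1}t^{(k-1)d+k}/((k-1)d+k)!$, recovering the exponent in the theorem.

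The concentration $A_j(s)/v_j(s)\to_p 1$ is proved by induction on $j$, paralleling the derivation of $v_j$ in \cite{fls}. The case $j=0$ is immediate since $A_0 \equiv N = v_0$. For the inductive step, conditional on the history of type-$(j-1)$ mutations, new type-$j$ mutations arrive as an inhomogeneous Poisson point process on $\mathcal{T}\times[0,\infty)$ whose intensity at time $r$ is $\mu_j$ times the indicator of the type-$(j-1)$ region. Each such mutation born at time $r\leq s$ at position $x$ contributes a subset of the ball of radius $\alpha(s-r)$ about $x$ to the type-$j$ region at time $s$, so computing conditional expectations gives
$$\mathbb{E}[A_j(s) \mid \mathcal{F}_{j-1}] = \int_0^s \mu_j A_{j-1}(r)\gamma_d(\alpha(s-r))^d\,dr - R_j(s),$$
where $\mathcal{F}_{j-1}$ is the sigma-algebra generated by mutations of types $1,\dots,j-1$ and $R_j(s)\geq 0$ is a correction accounting for overlap between type-$j$ balls and wrap-around on the torus. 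By the inductive hypothesis the leading integral is close to $v_j(s)$, and fluctuations of $A_j(s)$ about the conditional mean can be controlled by a standard second-moment estimate for Poisson functionals of the kind used in \cite{fls}.

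The main obstacle is showing that the correction $R_j$ and the Poisson fluctuations are of strictly smaller order than $v_j$ throughout the inductive argument. The assumption $\mu_k \ll 1/(\alpha^d\beta_{k-1}^{d+1})$, combined with (\ref{increasing}), is precisely what guarantees that at every intermediate level $j<k$ the typical type-$j$ ball radius $\alpha\beta_j$ is much smaller than $L=N^{1/d}$ and that the expected number of active type-$j$ balls grows large, so that overlap and wrap-around corrections are of lower order. Once this separation of scales is extracted from the definitions of the $\beta_j$, the estimates from \cite{fls} carry over to the increasing-rate setting, the only new ingredient being that each inductive concentration statement must be strong enough to absorb a further multiplicative factor of $\mu_{j+1}$ at the next level.
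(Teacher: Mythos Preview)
Your proposal is correct and follows essentially the same strategy as the paper: establish concentration of $Y_{k-1}(\beta_k t)$ around $v_{k-1}(\beta_k t)$, condition on the lower-type processes to express $\mathbb{P}(\sigma_k>\beta_k t)$ as the expectation of an exponential of the random integral, and pass to the limit via bounded convergence. The only notable differences are packaging: the paper does not rerun the induction you sketch but instead quotes the concentration estimates directly from \cite{fls} (restated there as Lemma~\ref{lemmafiveeight}), uses the monotonicity lemma (Lemma~\ref{approxbydeterministic}) to upgrade pointwise concentration to uniformity on $[\delta,\delta^{-1}]$, and isolates as a separate lemma (Lemma~\ref{hardestlemma}) the algebraic equivalence $\mu_k\ll 1/(\alpha^d\beta_{k-1}^{d+1}) \Longleftrightarrow \mu_k\ll 1/(\alpha^d\beta_k^{d+1})$, which is what makes the hypotheses of Lemma~\ref{lemmafiveeight} hold at the time scale $\beta_k$ rather than $\beta_{k-1}$.
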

When we have equal mutation rates (i.e. $\mu_i=\mu$ for all $i$), the result above is covered by part~3 of Theorem 10 in \cite{fls}.  The form of the result and the strategy of the proof are exactly the same in the more general case when the mutation rates can differ.  Theorem 3 is illustrated in Figure 3 for $k=3$.
\begin{center}
    \includegraphics[scale=0.33]{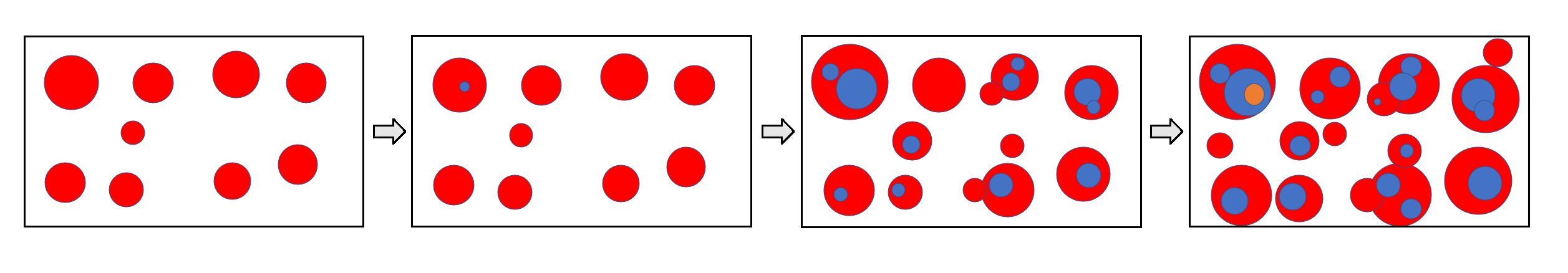}
    \\ Figure 3. Mutations of types 1, 2, 3 colored in red, blue, and orange respectively.
\end{center}

\subsection{Theorem 4: an intermediate case between Theorems 2 and 3}

Assume
$$\mu_1\gg\frac{\alpha}{N^{(d+1)/d}}.$$
We first define
\begin{align}
    \label{definel}
    l:=\max\Bigg\{j\geq 2:\mu_j\ll \frac{1}{\alpha^d\beta_{j-1}^{d+1}}\Bigg\}.
\end{align}
It follows from (\ref{increasing}) that if $\mu_j\ll 1/(\alpha^d\beta_{j-1}^{d+1})$, then $\mu_{j-1} \ll 1/(\alpha^d\beta_{j-1}^{d+1})$, which by Lemma \ref{hardestlemma} below implies that $\mu_{j-1} \ll 1/(\alpha^d\beta_{j-2}^{d+1})$.  It follows that
\begin{align}
    \label{infact}
    l=\max\Bigg\{j\geq 2:\mu_2\ll \frac{1}{\alpha^d \beta^{d+1}_{1}},\mu_3\ll \frac{1}{\alpha^d \beta^{d+1}_{2}},...,\mu_j\ll \frac{1}{\alpha^d \beta^{d+1}_{j-1}}\Bigg\}.
\end{align}
The definition of $l$ in (\ref{definel}) omits the possibility $l=1$, since $\beta_0$ is undefined. However, if we define $l = 1$ when the set over which we take the maximum in (\ref{definel}) is empty, then Theorem 4 below when $l = 1$ is the same as Theorem 2.   
On the other hand if $l \in \{k,k+1,...\}\cup \{\infty\}$, then by (\ref{infact}) we have $\mu_k\ll 1/(\alpha^d\beta_{k-1}^{d+1})$, in which case Theorem 3 applies.   
Hence we assume $l\in \{2,...,k-1\}$ and
\begin{align}
    \label{thm4assume}
    \mu_{l+1}\gg \frac{1}{\alpha^d\beta_l^{d+1}}.
\end{align}
The situation in Theorem 4 is a hybrid of Theorems 2 and 3. A mutation of type $j\in \{1,...,l-1\}$ takes a longer time to fixate in the torus than the interarrival time $\sigma_{j}-\sigma_{j-1}$. As a result, if $j\in \{2,...,l\}$, there will be many mostly nonoverlapping balls of type $j-1$ before time $\sigma_j$. Using this fact, we proceed as in Theorem 3 and find  $\lim_{N\to\infty}\PP(\sigma_l>\beta_lt)$. Next, our assumption in (\ref{thm4assume}) places us in the regime of Theorem 2; all mutations of types $l+1,...,k$ happen so quickly that for all $\epsilon>0$ we have $\mathbb{P}(\sigma_k-\sigma_l>\beta_l\epsilon)\to 0$. Then combining these two results yields the following theorem.

\begin{Theo}
\label{theorem4}
Suppose (\ref{increasing}) holds, and suppose $\mu_1\gg \alpha/N^{(d+1)/d}$.  Suppose also that $l\in \{2,...,k-1\}$ and that $\mu_{l+1}\gg 1/(\alpha^d\beta_l^{d+1})$. Then for $t>0$,
$$\mathbb{P}(\sigma_k>\beta_l t)\to \exp\Big(-\frac{\gamma_d^{l-1}(d!)^{l-1}}{((l-1)d+l)!}t^{(l-1)d+l}\Big).$$
\end{Theo}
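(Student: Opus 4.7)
The plan is to write $\sigma_k = \sigma_l + (\sigma_k - \sigma_l)$, establish the limiting distribution of $\sigma_l/\beta_l$ by a direct appeal to Theorem \ref{theorem3}, show $(\sigma_k-\sigma_l)/\beta_l \to_p 0$ by the growing-ball argument from the heuristic for Theorem \ref{theorem2}, and combine the two pieces via Slutsky's theorem.

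For the first piece, Theorem \ref{theorem3} applies with $k$ replaced by $l$: the assumption $\mu_1 \gg \alpha/N^{(d+1)/d}$ is given, and the defining property of $l$ in (\ref{definel}) is precisely $\mu_l \ll 1/(\alpha^d \beta_{l-1}^{d+1})$. Theorem \ref{theorem3} therefore yields
$$\mathbb{P}(\sigma_l > \beta_l t) \to \exp\Big(-\frac{\gamma_d^{l-1}(d!)^{l-1}}{((l-1)d+l)!}\, t^{(l-1)d+l}\Big), \qquad t>0,$$
which is exactly the target limit for $\sigma_k/\beta_l$.

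For the second piece, fix $\eps>0$ and condition on the location $x_l$ and time $\sigma_l$ of the first type $l$ mutation. Using the strong Markov property applied to the Poisson point processes driving the mutation dynamics (whose construction is recalled in Section \ref{waitkproofs}), the ball $B(x_l,\alpha r)$ is entirely of type at least $l$ at time $\sigma_l + r$, and as long as it has not wrapped around the torus its volume is $\gamma_d(\alpha r)^d$. Exactly as in the calculation leading to (\ref{singleball}), the conditional probability that no type $l+1$ mutation appears inside this ball during $[\sigma_l,\sigma_l+\eps \beta_l]$ is at most
$$\exp\Big(-\frac{\gamma_d}{d+1}\, \mu_{l+1}\alpha^d (\eps\beta_l)^{d+1}\Big) = \exp\Big(-\frac{\gamma_d \eps^{d+1}}{d+1}\,\mu_{l+1}\alpha^d \beta_l^{d+1}\Big),$$
which tends to $0$ because hypothesis (\ref{thm4assume}) is equivalent to $\mu_{l+1}\alpha^d \beta_l^{d+1}\to\infty$. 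Hence $\mathbb{P}(\sigma_{l+1}-\sigma_l > \eps\beta_l) \to 0$. Iterating this argument inside the nested balls of types $l+1,\dots,k-1$, and using $\mu_j \gtrsim \mu_{l+1}$ for $j>l$ from (\ref{increasing}), gives $\mathbb{P}(\sigma_j - \sigma_{j-1} > \eps \beta_l) \to 0$ for each $j \in \{l+1,\dots,k\}$. Summing over the finitely many values of $j$ and sending $\eps \downarrow 0$ along a countable sequence yields $(\sigma_k-\sigma_l)/\beta_l \to_p 0$, and Slutsky's theorem finishes the proof.

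The main technical obstacle I expect is controlling the wrap-around event for the growing type $l$ ball during the short window $[\sigma_l,\sigma_l+\eps\beta_l]$, since the volume formula $\gamma_d(\alpha r)^d$ is only a lower bound once the ball starts self-intersecting. One needs to verify that under the standing hypotheses $\alpha \beta_l \ll N^{1/d}$, so that with high probability the ball is still much smaller than $\mathcal{T}$ on the relevant time scale; this follows by combining $\mu_1 \gg \alpha/N^{(d+1)/d}$ with the definition of $l$ and the form of $\beta_l$ in (\ref{betakay}), and is of the same flavor as analogous checks that must appear in the proofs of Theorems \ref{theorem2} and \ref{theorem3}. Modulo that bookkeeping, the proof is a clean concatenation of the two preceding strategies.
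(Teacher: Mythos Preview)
Your proposal is correct and follows essentially the same route as the paper: apply Theorem \ref{theorem3} to $\sigma_l$, then show $(\sigma_k-\sigma_l)/\beta_l\to_p 0$ by decomposing into increments $\sigma_{j+1}-\sigma_j$ and using the growing-ball lower bound $Y_j(s)\geq \gamma_d\alpha^d(s-\sigma_j)^d$ together with $\mu_{j+1}\alpha^d\beta_l^{d+1}\to\infty$. The paper packages the combination step as a sandwich inequality rather than Slutsky, and handles the wrap-around check via a short lemma showing $\beta_l\ll N^{1/d}/\alpha$ from $\mu_1\gg\alpha/N^{(d+1)/d}$ alone, exactly as you anticipated.
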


In pictures, Theorem 4 looks like Figure 3 for mutations up to type $l$. Then once the first type $l$ mutation appears and spreads in a circle, all the subsequent mutations become nested within that circle, similar to Figure 2. 

\begin{Rmk}
{\em Theorems 1-4 cover most of the possible cases in which (\ref{increasing}) holds.  However, we assume that either $\mu_1 \ll \alpha/N^{(d+1)/d}$ or $\mu_1 \gg \alpha/N^{(d+1)/d}$.  When $\mu_1 \asymp \alpha/N^{(d+1)/d}$, we expect that at the time a type 2 mutation appears, there could be several overlapping type~1 balls whose size is comparable to the size of the torus, and we do not expect the limiting distribution of $\sigma_k$ to have a simple expression.  Consequently, we do not pursue this case here.  We note that if $\mu_1 \asymp \alpha/N^{(d+1)/d}$ and all mutation rates are equal (i.e. $\mu_i=\mu$ for all $i$), then it is proven, as a special case of Theorem 12 in \cite{fls}, that $N\mu\sigma_k$ converges in distribution to a nondegenerate random variable for every $k\geq 1$.  Likewise, we do not consider the case in which, instead 
of (\ref{thm4assume}), we have $\mu_{l+1} \asymp 1/(\alpha^d \beta_l^{d+1})$.  In this case we believe there could be several overlapping type $l$ balls at the time the first type $l+1$ mutation occurs, again preventing there from being a simple expression for the limit distribution.}
\end{Rmk}

\subsection{Distances between mutations} \label{distanceresults}

For $1\leq i<j$, define $D_{i,j}$ to be the distance in the torus between the location of the
first mutation of type $j$ and the location of the first mutation of type $i$.  Also define $D_{i+1}:=D_{i,i+1}$.

Consider the setting of Theorem 2.  We will assume a stronger version of (\ref{increasing}), which is
\begin{align}
\label{strincreasing0}
\mu_2\ll \mu_3\ll \mu_4\ll \cdots.
\end{align}
Recall that the mutations appear in nested balls as in Figure 2.
Because the first type $j+1$ mutation will therefore appear before the second type $j$ mutation with high probability, we can calculate, as in (\ref{singleball}), that
$$\mathbb{P}(\sigma_{j+1}-\sigma_j>t) \approx \exp\Big(-\frac{\mu_{j+1}\gamma_d\alpha^d}{d+1}t^{d+1}\Big).$$
It follows that if we define $\kappa_{j+1}$ as in (\ref{betakay}), then
$$\mathbb{P}(\sigma_{j+1}-\sigma_j>\kappa_{j+1}t) \approx \exp\Big(-\frac{\gamma_d}{d+1}t^{d+1}\Big).$$
With this, we can calculate the approximate probability density $f(t)$ of $(\sigma_{j+1}-\sigma_j)/\kappa_{j+1}$.
This allows us to calculate 
$$\mathbb{P}\bigg(\frac{D_{j+1}}{\alpha\kappa_{j+1}}\leq s \bigg) \approx \int_{0}^{\infty}\mathbb{P}\bigg(\frac{D_{j+1}}{\alpha\kappa_{j+1}}\leq s\Big|\frac{\sigma_{j+1}-\sigma_j}{\kappa_{j+1}}=t \bigg)f(t)dt.$$
The location of the first type $j+1$ mutation conditioned on $\sigma_{j+1}-\sigma_j=\kappa_{j+1}t$ is a uniformly random point on a $d$-dimensional ball of radius $\alpha\kappa_{j+1}t$. This allows us to calculate $\lim_{N\to\infty}\mathbb{P}(D_{j+1}\leq \alpha\kappa_{j+1}s)$. Next, because of (\ref{strincreasing0}), mutations of types $j+2,j+3,j+4,...$ appear rapidly once the first type $j+1$ appears.  This means that $D_{j+2}+\cdots+D_{j+k}$ is small relative to $D_{j+1}$, and therefore that $D_{j,k}$ has the same limiting distribution as $D_{j+1}$.
These heuristics lead to the following theorem. 

\begin{Theo}
\label{theorem5}
Suppose (\ref{strincreasing0}) holds, and suppose $\mu_1\gg\alpha/N^{(d+1)/d}$ and $\mu_2\gg (N\mu_1)^{d+1}/\alpha^d$. Suppose $1 \leq j < k$.  Then for all $s>0$,
$$\mathbb{P}\bigg(\frac{D_{j,k}}{\alpha\kappa_{j+1}}\leq s\bigg)\to\int_0^\infty \gamma_d (t \wedge s)^d \exp\Big(-\frac{\gamma_d t^{d+1}}{d+1}\Big)dt.$$
\end{Theo}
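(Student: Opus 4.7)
The strategy is to decouple into (a) the limiting distribution of $D_{j,j+1}/(\alpha\kappa_{j+1})$ and (b) the negligibility statement $D_{j+1,k}/(\alpha\kappa_{j+1})\to_p 0$, which by the triangle inequality $|D_{j,k}-D_{j,j+1}|\le D_{j+1,k}$ transfers the limit from $D_{j,j+1}$ to $D_{j,k}$. Both parts rest on a nested-balls picture: at time $\sigma_{j+1}$, with probability tending to $1$, the mutations form a sequence of nested balls centered at the sites $X_1,\dots,X_j$ of the first mutation of each type, with the type-$i$ ball having radius $\alpha(\sigma_{j+1}-\sigma_i)$. Note that $\alpha\kappa_{j+1}=(\alpha/\mu_{j+1})^{1/(d+1)}\ll N^{1/d}$ under the hypotheses, so these balls never wrap around $\mathcal{T}$.

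For step (a), I would condition on $\sigma_j$ and on $X_j$ (which may be placed at the origin by torus translation invariance). On the nested-balls event, the first type-$(j+1)$ mutation is the first atom of the rate-$\mu_{j+1}$ space-time Poisson point process falling inside the growing ball $B(X_j,\alpha(r-\sigma_j))$, so a standard Poisson computation yields
\[
\PP\bigl(\sigma_{j+1}-\sigma_j>t\,\bigm|\,\sigma_j,X_j\bigr)=\exp\!\Bigl(-\tfrac{\mu_{j+1}\gamma_d\alpha^d}{d+1}t^{d+1}\Bigr),
\]
and, conditional on $\sigma_{j+1}-\sigma_j=t$, $X_{j+1}$ is uniformly distributed on $B(X_j,\alpha t)$. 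Rescaling by $\kappa_{j+1}$, the normalized increment has limiting density $\gamma_d u^d\exp(-\gamma_d u^{d+1}/(d+1))$, while the uniform-in-ball property gives $\PP(D_{j,j+1}/(\alpha\kappa_{j+1})\le s\mid(\sigma_{j+1}-\sigma_j)/\kappa_{j+1}=u)=(s\wedge u)^d/u^d$; multiplying these and integrating over $u$ produces the claimed integral as the limit of $\PP(D_{j,j+1}/(\alpha\kappa_{j+1})\le s)$. For step (b), the same construction applied inductively for $i=j+1,\dots,k-1$ gives $X_{i+1}\in B(X_i,\alpha(\sigma_{i+1}-\sigma_i))$, hence $D_{j+1,k}\le\alpha(\sigma_k-\sigma_{j+1})$; each increment satisfies $\sigma_{i+1}-\sigma_i=O_p(\kappa_{i+1})$ by the same Poisson calculation, and $\mu_{i+1}\gg\mu_{j+1}$ for $i>j$ from (\ref{strincreasing0}) forces $\kappa_{i+1}\ll\kappa_{j+1}$, so $D_{j+1,k}/(\alpha\kappa_{j+1})\to_p 0$.

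The main obstacle is making the nested-balls event rigorous for all $i\le k-1$ simultaneously. Inductively this reduces to showing that, for each $i$, no second type-$i$ mutation appears during $[\sigma_i,\sigma_{i+1}]$ with probability $1-o(1)$. For $i=1$, the type-$0$ region occupies essentially the whole torus, so the expected number of new type-$1$ mutations in that interval is at most $N\mu_1\kappa_2$, which tends to $0$ precisely because $\mu_2\gg(N\mu_1)^{d+1}/\alpha^d$. For $i\ge 2$, one uses the inductive nested-balls structure: the type-$(i-1)$ region during $[\sigma_i,\sigma_{i+1}]$ has volume of order $\gamma_d\alpha^d\kappa_i^d$ (since $\kappa_{i+1}\ll\kappa_i$ makes the additional ball growth negligible), so the expected count is of order $\mu_i\alpha^d\kappa_i^d\kappa_{i+1}=(\mu_i/\mu_{i+1})^{1/(d+1)}\to 0$ by (\ref{strincreasing0}). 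A union bound over $i\in\{1,\dots,k-1\}$ then pins down the nested-balls picture with probability $1-o(1)$ and completes the verification.
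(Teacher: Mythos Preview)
Your approach is essentially the paper's: both reduce $D_{j,k}$ to $D_{j+1}$ via the triangle inequality and the negligibility of the later increments on the scale $\alpha\kappa_{j+1}$, and both compute the limit for $D_{j+1}$ by working on the nested-balls event and using the Poisson/uniform-in-ball calculation you describe. The paper packages the nested-balls event as $\bigcap_{i\le j}\{\sigma_{j+1}<\sigma_i^{(2)}\}$ and proves it via an inductive chain of lemmas (its Lemmas~\ref{nestedballs0}--\ref{nestedballs5}), while you use a direct expected-count bound; both routes are valid.

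One imprecision in your induction is worth flagging. To bound the volume of the type-$(i-1)$ region during $[\sigma_i,\sigma_{i+1}]$ by $O(\alpha^d\kappa_i^d)$, you need that region to remain a \emph{single} ball throughout that interval, i.e.\ $\sigma_{i-1}^{(2)}>\sigma_{i+1}$, whereas your inductive hypothesis as stated only delivers $\sigma_{i-1}^{(2)}>\sigma_i$. The fix is cheap---strengthen the hypothesis to, say, $\sigma_{i-1}^{(2)}>\sigma_{i}+\kappa_i\delta$ for some small $\delta$ (or all the way to $\sigma_{i-1}^{(2)}>\sigma_k$); your same expected-count estimate still gives $o(1)$ because $\sigma_k-\sigma_i=O_p(\kappa_{i+1})\ll\kappa_i$. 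The paper's Lemma~\ref{nestedballs5} handles exactly this point by proving the pair of statements $\mathbb{P}(\sigma_{i-1}^{(2)}>\sigma_i+\kappa_i\delta)>1-\epsilon$ and $\mathbb{P}(\sigma_i^{(2)}>\sigma_i+\kappa_i\delta)>1-\epsilon$ together in the inductive step.
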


Recall the definition of $l$ in (\ref{definel}). Theorem 4 is similar to Theorem 2 in that once the first type $l$ mutation appears, all the subsequent type $l+1,l+2,...$ mutations happen quickly. Therefore, it is reasonable to expect that the type $l,l+1,l+2,...$ mutations behave similarly as the type $1,2,3,...$ mutations in Theorem \ref{theorem2}. Hence, analogous to (\ref{strincreasing0}), assume that
\begin{align}
    \label{strincreasing1}
    \mu_{l+1}\ll\mu_{l+2}\ll \mu_{l+3}\ll \cdots. 
\end{align}
We then obtain the following result.

\begin{Theo}
\label{theorem6}
Suppose (\ref{strincreasing1}) holds, and suppose $\mu_1 \gg \alpha/N^{(d+1)/d}$.  Define $l$ as in (\ref{definel}), and suppose also that $l \geq 2$ and that $\mu_{l+1}\gg 1/(\alpha^d\beta_l^{d+1})$.
Suppose $l \leq j<k$. Then for all $s>0$,
$$\mathbb{P}\bigg(\frac{D_{j,k}}{\alpha\kappa_{j+1}}\leq s\bigg)\to\int_0^{\infty} \gamma_d (t \wedge s)^d\exp\Big(-\frac{\gamma_d t^{d+1}}{d+1}\Big)dt.$$
\end{Theo}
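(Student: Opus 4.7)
The plan is to reduce the proof to the analogous setup of Theorem~\ref{theorem5}, with $\sigma_l$ playing the role of $\sigma_1$ and the ball grown from the first type $l$ mutation playing the role of the original torus.  Under our hypotheses, Theorem~\ref{theorem4} tells us $\sigma_l$ is of order $\beta_l$; meanwhile the subsequent interarrival times $\sigma_{j+1} - \sigma_j$ for $j \geq l$ should be of order $\kappa_{j+1}$, and by (\ref{thm4assume}) together with (\ref{strincreasing1}) we have $\beta_l \gg \kappa_{l+1} \gg \kappa_{l+2} \gg \cdots$.  Consequently, on a time window of length $o(\beta_l)$ after $\sigma_l$, the evolution of the ``first'' mutations of each type is driven entirely by the single growing ball around the first type $l$ mutation, exactly as in Theorem~\ref{theorem2}/Theorem~\ref{theorem5}.

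First I would show that on an event of probability tending to $1$, the first mutation of each type in $\{l+1, \ldots, k\}$ occurs inside the ball grown from the first type $l$ mutation, so that the mutations of types $l, l+1, \ldots, k$ form a nested family as in Figure~2.  The key estimate is that during the window $[\sigma_l, \sigma_l + C\kappa_{l+1}]$ for large $C$, the total volume of type $l$ regions not descended from the first type $l$ mutation is negligible; this can be controlled via the $v_{l-1}$ expected-volume approximation from Section~2.3 together with a Poisson tail bound.  Once the nested picture is established, for each $j \in \{l, \ldots, k-1\}$ the Poisson structure of the type $j+1$ mutations, combined with the fact that the first type $j$ ball has volume $\gamma_d (\alpha t)^d$ at time $\sigma_j + t$ (well before it wraps around the torus), yields
\begin{equation*}
\PP\!\left(\frac{\sigma_{j+1} - \sigma_j}{\kappa_{j+1}} > t\right) \longrightarrow \exp\!\left(-\frac{\gamma_d t^{d+1}}{d+1}\right),
\end{equation*}
so that $(\sigma_{j+1} - \sigma_j)/\kappa_{j+1}$ converges in distribution to a variable with density $f(t) = \gamma_d t^d \exp(-\gamma_d t^{d+1}/(d+1))$.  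Moreover, conditional on $\sigma_{j+1} - \sigma_j = \kappa_{j+1} t$, the location of the first type $j+1$ mutation is approximately uniform on the ball of radius $\alpha \kappa_{j+1} t$ centered at the first type $j$ mutation, so that $\PP(D_{j+1} \leq \alpha \kappa_{j+1} s \mid (\sigma_{j+1}-\sigma_j)/\kappa_{j+1} = t) = (s \wedge t)^d / t^d$.  Conditioning and integrating gives the claimed limit for $D_{j+1} = D_{j,j+1}$, exactly as sketched before the statement of Theorem~\ref{theorem5}.

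To pass from $D_{j+1}$ to $D_{j,k}$, I would use the reverse triangle inequality $|D_{j,k} - D_{j,j+1}| \leq D_{j+1,k} \leq \sum_{i=j+2}^{k} D_i$, where the notation $D_i = D_{i-1,i}$ is used.  By the same analysis, each $D_i$ has scale $\alpha \kappa_i$, and (\ref{strincreasing1}) forces $\kappa_i \ll \kappa_{j+1}$ for $i > j+1$, so every term is $o_p(\alpha \kappa_{j+1})$.  Hence $D_{j,k} = D_{j+1} + o_p(\alpha \kappa_{j+1})$, and $D_{j,k}/(\alpha \kappa_{j+1})$ converges to the same limit as $D_{j+1}/(\alpha \kappa_{j+1})$.

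The main obstacle is the first step: rigorously ruling out contributions to the first type $l+1$ mutation from type $l$ regions not descended from the first type $l$ mutation.  Unlike in Theorem~\ref{theorem5}, where the torus is still entirely type $0$ at time $\sigma_1$, here at time $\sigma_l$ there is a complicated background consisting of many small balls of types $1, \ldots, l-1$ and possibly the beginnings of secondary type $l$ balls.  Controlling this background carefully---presumably via a coupling with an ``idealized'' process in which only the first type $l$ ball is allowed to seed higher mutations, combined with the $v_j$ approximations used in the proofs of Theorems~\ref{theorem3} and~\ref{theorem4}---is what makes the argument more delicate than that of Theorem~\ref{theorem5}.
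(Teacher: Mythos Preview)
Your overall strategy matches the paper's exactly: relabel types $l,l+1,\dots$ as types $1,2,\dots$, establish the nested-balls picture, and rerun the argument of Theorem~\ref{theorem5} (Lemmas~\ref{nestedballs0}--\ref{pretheorem5} and the triangle-inequality reduction from $D_{j,k}$ to $D_{j+1}$).

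The one place where the paper is cleaner than your proposal is the step you flag as the main obstacle. Rather than controlling the \emph{volume} of secondary type-$l$ regions on the window $[\sigma_l,\sigma_l+C\kappa_{l+1}]$, or setting up a coupling, the paper simply shows that with high probability \emph{no} second type-$l$ mutation occurs at all on a window of length $\beta_l r$ for some small $r>0$ (Lemma~\ref{nestedballs9}). The point is that the deterministic approximation $Y_{l-1}(\beta_l u)\approx g(u)/(\mu_l\beta_l)$ from the proof of Theorem~\ref{theorem3}, together with $\sigma_l\leq\beta_l t$ with high probability, gives
\[
\int_{\sigma_l}^{\sigma_l+\beta_l r}\mu_l Y_{l-1}(s)\,ds \;\leq\; (1+\lambda)\int_t^{t+r} g(s)\,ds,
\]
which is small for $r$ small; hence $\PP(\sigma_l^{(2)}-\sigma_l>\beta_l r)$ is close to $1$. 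Since $\kappa_{l+1}\ll\beta_l$ and $(\sigma_k-\sigma_l)/\beta_l\to_p 0$ by (\ref{sigmaconvprob}), this immediately yields $\PP(\sigma_k<\sigma_l^{(2)})\to 1$ (Lemma~\ref{nestedballs10}), replacing Lemma~\ref{nestedballs2} in the chain. Your volume-control idea would also work, but bounding the \emph{occurrence} of $\sigma_l^{(2)}$ rather than the volume of secondary regions avoids having to track a random number of small balls against the background and sidesteps the coupling entirely.
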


\begin{Rmk}
{\em Theorems \ref{theorem5} and \ref{theorem6} hold in the settings of Theorems \ref{theorem2} and \ref{theorem4} respectively.  In the setting of Theorem 1,
each type $i\geq 1$ mutation fills up the entire torus before a type $i+1$ mutation occurs, and so the first type $i+1$ mutation appears at a uniformly distributed point on the torus, independently of where all previous mutations originated.  Therefore, the problem of finding the distribution of the distances between mutations becomes trivial in this case.
On the other hand, in the setting of Theorem 3, type $i$ mutations appear in small and mostly non-overlapping circles before the first type $i+1$ mutation appears.
Thus, calculating the distribution of $D_{i+1}$ requires understanding not only the total volume of the type $i$ region, but also the sizes and locations of many small type $i$ regions.  We do not pursue this case here, but we conjecture that because the first type $i+1$ mutation is likely not to appear within the type $i$ region generated by the first type $i$ mutation, the locations of the first type $i$ and the first type $i+1$ mutation should be nearly independent of each other, as in the setting of Theorem 1.}
\end{Rmk}

\section{Proofs of limit theorems for $\sigma_k$}\label{waitkproofs}
In this section, we will prove Theorems 1-4. We begin by introducing the structure of the torus $\mathcal{T}=[0,L]^d$, and will follow the notation of \cite{fls}. We define a pseudometric on the closed interval $[0,L]$ by
$$d_L(x,y):=\min\{|x-y|,L-|x-y|\}.$$
The $d$-dimensional torus of side length $L$ will be denoted by $\mathcal{T}=[0,L]^d$. For $x=(x^1,...,x^d)\in \mathcal{T}$ and $y=(y^1,...,y^d)\in \mathcal{T}$ we define a pseudometric by
$$|x-y|:=\sqrt{\sum_{i=1}^{d}d_L(x^i,y^i)^2}.$$
The torus should be viewed as $\mathcal{T}$ modulo the equivalence relation $x\sim y$ iff $|x-y|=0$, or more simply $\mathcal{T}=(\mathbb{R}\text{ mod }L)^d$. However, we will continue to write $\mathcal{T}=[0,L]^d$, keeping in mind that certain points are considered to be the same via the equivalence relation defined above. It will be useful to observe the following:
\begin{enumerate}
    \item We have $d_L(x,y)\leq L/2$ for all $x,y\in [0,L]$. As a result, the distance between any two points $x,y\in \mathcal{T}$ is at most
    \begin{align}
        \label{maximaldistance}
        \sup\{|x-y|:x,y\in \mathcal{T}\}= \sqrt{\sum_{i=1}^{d}\Big(\frac{L}{2}\Big)^2}=\frac{\sqrt{d}L}{2}.
    \end{align}
    \item Therefore, once a mutation of type $j$ appears, the entire torus will become type $j$ in time
    \begin{align}
        \label{fixation time}
        \frac{\text{maximal distance between any }x,y\in\mathcal{T}}{\text{rate of mutation spread per unit time}}=\frac{\sqrt{d}L}{2\alpha}.
    \end{align}
\end{enumerate}
We use $|A|$ to denote the Lebesgue measure of some subset $A$ of $\mathcal{T}$ or $\mathcal{T}\times [0,\infty)$, so that $N=L^d=|\mathcal{T}|$ is the torus volume. Each $x\in \mathcal{T}$ at time $t$ has a type $k\in \{0,1,2,...\}$, which we denote by $T(x,t)$, corresponding to the number of mutations the site has acquired. The set of type $i$ sites is defined by $$\chi_i(t):=\{x\in \mathcal{T}:T(x,t)=i\}$$
The set of points whose type is at least $i$ is defined by
$$\psi_i(t):=\{x\in \mathcal{T}:T(x,t)\geq i\}=\bigcup_{j=i}^{\infty}\chi_j(t).$$
At time $t$, we denote the total volume of type $i$ sites by $X_i(t):=|\chi_i(t)|$, and the total volume of sites with type at least $i$ by $Y_i(t):=|\psi_i(t)|$.  The first time a type $k$ mutation appears in the torus can be expressed as $\sigma_k=\inf\{t>0:Y_k(t)>0\}$. 

Still following \cite{fls}, we now explicitly describe the construction of the process which gives rise to mutations in the torus. We model mutations as random space-time points $(x,t)\in \mathcal{T}\times [0,\infty)$. Let $(\Pi_k)_{k=1}^{\infty}$ be a sequence of independent Poisson point processes on $\mathcal{T}\times [0,\infty)$, where $\Pi_k$ has intensity $\mu_k$. That is, for any space-time region $A\subseteq  \mathcal{T}\times [0,\infty)$, the probability that $A$ contains $j$ points of type $k$ is
$$e^{-\mu_k|A|}\frac{(\mu_k|A|)^j}{j!}.$$
Each $(x,t)\in \Pi_k$ is a space-time point at which $x\in \mathcal{T}$ can acquire a $k$th mutation at time $t$. We say that $x$ mutates to type $k$ at time $t$ precisely when $x\in \chi_{k-1}(t)$ and $(x,t)\in \Pi_k$. Once an individual obtains a type $k$ mutation, it spreads the type $k$ mutations outward in a ball at rate $\alpha$ per unit time.


\subsection{Proof of Theorem 1}
In the setting of Theorem 1, once the first mutation appears, with high probability it spreads to the entire torus before another mutation appears.  The proof of Theorem 1 uses Theorem 1 of \cite{fls}, which we restate  below as Theorem A.  Theorem 1 is very similar to Theorem A when $j=1$.  However, Theorem A requires $\mu_j \ll \alpha/N^{(d+1)/d}$ for all $j \in \{1, \dots, k\}$, whereas Theorem 1 requires this condition only for $j = 1$.  This is why Theorem 1 can not be deduced directly from Theorem A, even though the proofs of the results are essentially the same.
\\~\\
\textbf{Theorem A.} \textit{Suppose $\mu_i\ll \alpha/N^{(d+1)/d}$ for $i\in \{1,...,k-1\}$. Suppose there exists $j\in \{1,...,k\}$ such that $\mu_j\ll \alpha/N^{(d+1)/d}$ and 
$$\frac{\mu_i}{\mu_j}\to c_i\in (0,\infty]\text{ for all }i\in \{1,...,k\}.$$
Let $W_1,...,W_k$ be independent random variables such that $W_i$ has an exponential distribution with rate parameter $c_i$ if $c_i<\infty$ and $W_i=0$ if $c_i=\infty$. Then
$$N\mu_j\sigma_k\Rightarrow W_1+\cdots+W_k.$$}

\noindent \textit{Proof of Theorem 1.} Let $r:=\max\{j\in \{1,...,k\}:\mu_j\lesssim \mu _1\}$.
Then
$ \mu_j\ll \alpha/N^{(d+1)/d}$ for all $j \in \{1, \dots, r\}$. By Theorem A, we have
$$N\mu_1\sigma_r\Rightarrow W_1+\cdots+ W_r.$$
If $r=k$, then the conclusion follows. Otherwise, $r\leq k-1$, and by the maximality of $r$ and equation (\ref{increasing}), we have $\mu_l/\mu_1 \rightarrow \infty$ for all $l\in \{r+1,...,k\}$.
Then the result follows if we show 
$$N\mu_1(\sigma_k-\sigma_r)\to_p 0.$$
We have
\begin{equation}\label{teles}
0\leq N\mu_1(\sigma_k-\sigma_r) = N\mu_1\sum_{j=r}^{k-1}(\sigma_{j+1}-\sigma_j).
\end{equation}
We will find an upper bound for the right-hand side of (\ref{teles}). For $i\geq 1$, let $$t_i=\inf\{t>0:Y_i(t)=N\}$$ be the first time which every point in $\mathcal{T}$ is of at least type $i$. Define $\hat{t}_i:=t_i-\sigma_i$, which is the time elapsed between $\sigma_i$ and when mutations of type $i$ fixate in the torus. Also define $\hat{\sigma}_i=\inf\{t>0:\Pi_i\cap (\mathcal{T}\times [t_{i-1},t])\neq \varnothing\}$, which is the first time there is a potential type $i$ mutation after $t_{i-1}$. Observe that because we always have $\sigma_i \leq \hat{\sigma}_i$,
$$\sigma_{j+1}-\sigma_j \leq \hat{\sigma}_{j+1}-\sigma_j = \hat{\sigma}_{j+1}-\sigma_j +t_j-t_j = \hat{t}_j + (\hat{\sigma}_{j+1}-t_j).$$
Also observe that by (\ref{fixation time}), we have
$\hat{t}_j\leq \sqrt{d}N^{1/d}/(2\alpha).$
Consequently,
the right-hand side of (\ref{teles}) has the upper bound
\begin{align*}
    &  N\mu_1\Big(\sum_{j=r}^{k-1}\hat{t}_j+\sum_{j=r}^{k-1}(\hat{\sigma}_{j+1}-t_j)\Big)  \leq N\mu_1(k-r)\frac{\sqrt{d}N^{1/d}}{2\alpha} + N\mu_1\sum_{j=r}^{k-1}(\hat{\sigma}_{j+1}-t_j). 
\end{align*}
The result follows if the right-hand side of the above expression converges to $0$ in probability. The first term tends to zero because $\mu_1\ll \alpha/N^{(d+1)/d}$. The second term tends to zero because $\hat{\sigma}_{j+1}-t_j\sim \text{Exponential}(N\mu_{j+1})$, so
$N\mu_1(\hat{\sigma}_{j+1}-t_j)\sim \text{Exponential}(\mu_{j+1}/\mu_1)\to_p 0$. \QED{}

\subsection{Proof of Theorem 2}

\begin{Lem}
\label{computeexpectation}
Let $t_N$ be a random time that is $\sigma(\Pi_1,...,\Pi_{j-1})$-measurable and satisfies $t_N\geq \sigma_{j-1}$. Then
$$\mathbb{P}(\sigma_j>t_N)=\mathbb{E}\Bigg[\exp\Big(-\int_{\sigma_{j-1}}^{t_N}\mu_jY_{j-1}(s)ds\Big)\Bigg].$$
\end{Lem}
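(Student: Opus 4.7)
\textit{Proof proposal.}
The plan is to express the event $\{\sigma_j > t_N\}$ as a void event for the Poisson point process $\Pi_j$ on a suitable space-time region, and then use the independence of $\Pi_j$ from $\sigma(\Pi_1,\dots,\Pi_{j-1})$ together with the Poisson null probability formula.

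First I would identify the relevant region. Define the random space-time set
$$A := \{(x,s)\in\mathcal{T}\times[0,\infty)\,:\,\sigma_{j-1}\leq s\leq t_N,\ x\in\psi_{j-1}(s)\}.$$
I claim that $\{\sigma_j>t_N\}=\{\Pi_j\cap A=\varnothing\}$. For one direction, recall that $x$ acquires a type $j$ mutation at time $s$ exactly when $x\in\chi_{j-1}(s)$ and $(x,s)\in\Pi_j$; such a point necessarily has $s\geq\sigma_{j-1}$, and on the event $\{\sigma_j>t_N\}$ no type $\geq j$ sites exist for $s\leq t_N$, so $\chi_{j-1}(s)=\psi_{j-1}(s)$ for $s\in[\sigma_{j-1},t_N]$. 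Hence the absence of a type $j$ mutation in $[0,t_N]$ is equivalent to $\Pi_j$ having no point in $A$. Conversely, if $\Pi_j\cap A=\varnothing$, then since any putative first type $j$ mutation would have to come from a point of $\Pi_j$ lying in $\chi_{j-1}(s)\subseteq\psi_{j-1}(s)$ with $s\in[\sigma_{j-1},t_N]$, that is, in $A$, no such mutation can occur before $t_N$, so $\sigma_j>t_N$.

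Second I would check the measurability and compute $|A|$. The process $\psi_{j-1}(\cdot)$ is determined by the Poisson processes $\Pi_1,\dots,\Pi_{j-1}$ (because a site of type at least $j-1$ retains that property forever, regardless of future mutations), so $s\mapsto Y_{j-1}(s)$ is $\sigma(\Pi_1,\dots,\Pi_{j-1})$-measurable. Since $t_N$ and $\sigma_{j-1}$ are also $\sigma(\Pi_1,\dots,\Pi_{j-1})$-measurable, Fubini gives
$$|A|=\int_{\sigma_{j-1}}^{t_N}Y_{j-1}(s)\,ds,$$
which is again $\sigma(\Pi_1,\dots,\Pi_{j-1})$-measurable.

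Finally I would use the Poisson property. The process $\Pi_j$ has intensity $\mu_j$ and is independent of $\sigma(\Pi_1,\dots,\Pi_{j-1})$. Conditioning on this $\sigma$-algebra freezes $A$ as a deterministic set, and the number of points of $\Pi_j$ in $A$ is then Poisson with parameter $\mu_j|A|$, giving
$$\mathbb{P}\bigl(\sigma_j>t_N\,\bigm|\,\sigma(\Pi_1,\dots,\Pi_{j-1})\bigr)=\mathbb{P}\bigl(\Pi_j\cap A=\varnothing\,\bigm|\,\sigma(\Pi_1,\dots,\Pi_{j-1})\bigr)=\exp\!\Bigl(-\mu_j\int_{\sigma_{j-1}}^{t_N}Y_{j-1}(s)\,ds\Bigr).$$
Taking expectations yields the claim. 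The only step requiring genuine care is the equivalence $\{\sigma_j>t_N\}=\{\Pi_j\cap A=\varnothing\}$; in particular, justifying that one may replace $\chi_{j-1}$ by $\psi_{j-1}$ in the definition of $A$, which is valid precisely because no type $\geq j$ sites exist on the event in question, so that the region $A$ really is measurable with respect to the lower-indexed Poisson processes and can be decoupled from $\Pi_j$.
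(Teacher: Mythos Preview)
Your proposal is correct and follows essentially the same approach as the paper: define the space-time region $A$ using $\psi_{j-1}$, identify $\{\sigma_j>t_N\}$ with $\{\Pi_j\cap A=\varnothing\}$, and apply the Poisson void probability after conditioning on $\sigma(\Pi_1,\dots,\Pi_{j-1})$. You actually give more care than the paper does to the equivalence step and to why $\psi_{j-1}$ (rather than $\chi_{j-1}$) can be used while keeping $A$ measurable with respect to the lower-indexed Poisson processes; this is a genuine subtlety that the paper passes over quickly.
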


\begin{proof}
Write $\mathcal{G}:=\sigma(\Pi_1,...,\Pi_{j-1})$. Define the set 
$$A:=\{(x,r)\in \psi_{j-1}(r)\times [\sigma_{j-1},t_N]\},$$
and note that the Lebesgue measure of this set, which we denote by $|A|$, is a ${\cal G}$-measurable random variable.  The event $\{\sigma_{j}>t_N\}$ occurs precisely when
$\Pi_j\cap A =\varnothing.$  Let $X$ be the number of points of $\Pi_j$ in the set $A$.  Because $\Pi_j$ is independent of $\Pi_1, \dots, \Pi_{j-1}$, the conditional distribution of $X$ given ${\cal G}$ is Poisson$(\mu_j|A|)$.  Therefore,
$$\mathbb{P}(\sigma_j>t_N|\mathcal{G})=\mathbb{P}(X=0|\mathcal{G})= \exp(-\mu_j |A|) = \exp\Big(-\int_{\sigma_{j-1}}^{t_N}\mu_jY_{j-1}(s)ds\Big).$$
Taking expectations of both sides finishes the proof.
\end{proof}

\noindent \textit{Proof of Theorem 2.} Write $N\mu_1\sigma_k$ as a telescoping sum
$$N\mu_1\sigma_k=N\mu_1\sigma_1+\sum_{j=2}^{k}N\mu_1(\sigma_j-\sigma_{j-1}).$$
We have $N\mu_1\sigma_1\sim \text{Exponential}(1)$. Hence it suffices to show that for each $j\geq 2$, the random variable $N\mu_1(\sigma_j-\sigma_{j-1})$ converges in probability to zero. Let $t>0$. Then by Lemma \ref{computeexpectation},
\begin{align}
    \mathbb{P}(N\mu_1(\sigma_j-\sigma_{j-1})> t) & = \mathbb{P}\Big(\sigma_j> \frac{t}{N\mu_1}+\sigma_{j-1}\Big) \nonumber
    \\ & = \mathbb{E}\Bigg[\exp\Big(-\int_{\sigma_{j-1}}^{t/(N\mu_1)+\sigma_{j-1}}\mu_jY_{j-1}(s)ds\Big)\Bigg]. \nonumber
\end{align}
We want to show that the term on the right-hand side tends to zero. By the dominated convergence theorem, it suffices to show that as $N\to \infty$,
$$\int_{\sigma_{j-1}}^{t/(N\mu_1)+\sigma_{j-1}}\mu_jY_{j-1}(s)ds\to \infty.$$
Notice that because $\mu_1\gg \alpha/N^{(d+1)/d}$, for all sufficiently large $N$ we have $t/(N\mu_1)\leq N^{1/d}/(2\alpha)$.  Therefore, at time $\sigma_{j-1} + t/(N \mu_1)$, there is a ball of type $j-1$ mutations of radius $\alpha(t - \sigma_{j-1})$ which has not yet begun to wrap around the torus and overlap itself.
Hence, we have $Y_{j-1}(s)\geq \gamma_d\alpha^d(s-\sigma_{j-1})^d$ for $s\in [\sigma_{j-1},\sigma_{j-1}+t/(N\mu_1)]$, and therefore
\begin{align*}
    \int_{\sigma_{j-1}}^{t/(N\mu_1)+\sigma_{j-1}}\mu_jY_{j-1}(s)ds& \geq \int_{\sigma_{j-1}}^{t/(N\mu_1)+\sigma_{j-1}}\mu_j\gamma_d\alpha^d (s-\sigma_{j-1})^d ds
    \\ & = \int_{0}^{t/(N\mu_1)}\mu_j\gamma_d \alpha^d u^d du
    \\ & = \frac{\mu_j\gamma_d \alpha^d}{d+1}\Big(\frac{t}{N\mu_1}\Big)^{d+1}.
\end{align*}
It remains to show
$$\frac{\mu_j\gamma_d\alpha^d}{d+1}\Big(\frac{t}{N\mu_1}\Big)^{d+1}\to \infty\text{ as }N\to \infty.$$
For the above to hold, it suffices to have $ \mu_j\gg (N\mu_1)^{d+1}/\alpha^d$, which holds due to the second assumption in the theorem and equation (\ref{increasing}). This finishes the proof. \QED{} 

\subsection{Proof of Theorem 3}

\noindent We recall the definition of $\beta_k$ as in (\ref{betakay}) of Section 2. In the setting of Theorem 3, $\beta_k$ is the order of magnitude of the time it takes for the $k$th mutation to appear. 

Much of the proof of Theorem 3 will rely on Lemma 9 of \cite{fls}, which approximates a monotone stochastic process by a deterministic function under a certain time-scaling. In order to apply this lemma, it is important to ensure that $Y_k(t)$ is well-approximated by its expectation, which is Lemma 8 of the same paper.

Before proving Theorem 3, we state several lemmas, some of which are from \cite{fls}. First, we need to ensure that the last assumption  $\mu_k\alpha^d\beta_{k-1}^{d+1}\to 0$ in Theorem 3 implies $\mu_k\alpha^d\beta_{k}^{d+1}\to 0$, so that we are able to use part 2 of Lemma \ref{lemmafiveeight} to approximate $Y_{k-1}(\beta_k t)$ by its expectation. 
\begin{Lem}
\label{hardestlemma}
For $k\geq 2$, we have $\mu_k\ll 1/(\alpha^d\beta_k^{d+1})$ if and only if $\mu_k\ll 1/(\alpha^d\beta_{k-1}^{d+1})$.
\end{Lem}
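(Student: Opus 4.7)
The plan is to reduce both asymptotic conditions to a common positive power of the ratio $\beta_{k-1}/\beta_k$ by exploiting the defining formula for $\beta_k$ in (\ref{betakay}). Write $A_k := (k-1)d + k$ for the exponent appearing in $\beta_k$, so that by definition
\[
\beta_k^{A_k} = \frac{1}{N\alpha^{(k-1)d}\prod_{i=1}^{k}\mu_i}, \qquad \beta_{k-1}^{A_{k-1}} = \frac{1}{N\alpha^{(k-2)d}\prod_{i=1}^{k-1}\mu_i}.
\]
Dividing these two identities gives the clean relation $\alpha^d \mu_k = \beta_{k-1}^{A_{k-1}}/\beta_k^{A_k}$, which will be the only input we need.

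Next I would plug this identity into each of the two quantities of interest. Since $A_k - (d+1) = (k-2)d + (k-1) = A_{k-1}$, a short calculation yields
\[
\mu_k \alpha^d \beta_k^{d+1} = \Bigl(\frac{\beta_{k-1}}{\beta_k}\Bigr)^{A_{k-1}}, \qquad \mu_k \alpha^d \beta_{k-1}^{d+1} = \Bigl(\frac{\beta_{k-1}}{\beta_k}\Bigr)^{A_k}.
\]
Thus both conditions in the lemma are simply the statement that a positive power of $\beta_{k-1}/\beta_k$ tends to $0$.

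To finish, I would note that for $k \geq 2$ and $d \geq 1$ the exponents $A_{k-1}$ and $A_k$ are both positive (indeed $A_{k-1} \geq 1$). Hence, since $\beta_{k-1}/\beta_k$ is a positive sequence, the convergence of any one positive power of it to $0$ is equivalent to the convergence of $\beta_{k-1}/\beta_k$ itself to $0$, which is in turn equivalent to the convergence of any other positive power. This gives the desired equivalence.

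There is no real obstacle; the entire content of the lemma is the algebraic identity $\alpha^d\mu_k = \beta_{k-1}^{A_{k-1}}/\beta_k^{A_k}$ combined with the exponent arithmetic $A_k - A_{k-1} = d+1$. The only thing to keep in mind when writing this up cleanly is to verify once that $A_{k-1} > 0$ in the base case $k=2$ (where $A_1 = 1$), so that no degeneracy occurs.
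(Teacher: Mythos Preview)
Your proof is correct. Both the paper and you carry out a direct algebraic verification from the definition of $\beta_k$, so the substance is the same; the difference is organizational. The paper runs a chain of equivalences, raising $\mu_k \ll 1/(\alpha^d\beta_k^{d+1})$ to the power $(k-1)d+k$, substituting the definition of $\beta_k$, and simplifying step by step until the expression for $1/(\alpha^d\beta_{k-1}^{d+1})$ emerges. You instead isolate the single identity $\alpha^d\mu_k = \beta_{k-1}^{A_{k-1}}/\beta_k^{A_k}$ and observe that both quantities in question are positive powers of the common ratio $\beta_{k-1}/\beta_k$, which makes the equivalence immediate. Your packaging is a bit more transparent and explains \emph{why} the equivalence holds, while the paper's chain of iffs is more mechanical; either is perfectly adequate for a lemma of this kind.
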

\begin{proof}
    By using the definition of $\beta_k$ from (\ref{betakay}), we get
\begin{align*}
     \mu_k \ll \frac{1}{\alpha^d\beta_k^{d+1}} & \iff \mu_k^{(k-1)d + k} \ll \frac{1}{\alpha^{d[(k-1)d + k]}} \Big(N \alpha^{(k-1)d} \prod_{i=1}^k \mu_k\Big)^{d+1}
     \\ & \iff \mu_k^{(k-2)d+(k-1)}\ll \frac{1}{\alpha^d}N^{d+1}\Big(\prod_{i=1}^{k-1}\mu_i\Big)^{d+1}
    \\ & \iff \mu_k^{(k-2)d+(k-1)}\ll \frac{\alpha^{d(d+1)(k-2)}}{\alpha^{d[(k-2)d+(k-1)]}}N^{d+1}\Big(\prod_{i=1}^{k-1}\mu_i\Big)^{d+1}
    \\ & \iff \mu_k^{(k-2)d+(k-1)}\ll \frac{1}{\alpha^{d[(k-2)d+(k-1)]}}\Big(N\alpha^{(k-2)d}\prod_{i=1}^{k-1}\mu_i\Big)^{d+1}
    \\ & \iff \mu_k \ll \frac{1}{\alpha^d\beta_{k-1}^{d+1}}
\end{align*}
as claimed.
\end{proof}
We also need Lemma 9 from \cite{fls}, which is restated as Lemma \ref{approxbydeterministic} below.  This lemma gives necessary conditions under which a monotone stochastic process is well-approximated by a deterministic function.

\begin{Lem}
\label{approxbydeterministic}
Suppose, for all positive integers $N$, $(Y_N(t),t\geq 0)$ is a nondecreasing stochastic process such that $\mathbb{E}[Y_N(t)] < \infty$ for each $t > 0$.  Assume there exist sequences of positive numbers $(\nu_N)_{N=1}^{\infty}$ and $(s_N)_{N=1}^{\infty}$ and a continuous nondecreasing function $g>0$ such that for each fixed $t>0$ and $\epsilon>0$, we have
\begin{equation}\label{lemmanine1}
    \lim_{N\to\infty}\mathbb{P}(|Y_N(s_Nt)-\mathbb{E}[Y_N(s_Nt)]|>\epsilon\mathbb{E}[Y_N(s_Nt)])=0
\end{equation}
and 
\begin{equation}\label{lemmanine2}
\lim_{N\to\infty}\frac{1}{\nu_N}\mathbb{E}[Y_N(s_Nt)]=g(t).
\end{equation}
Then for all $\epsilon>0$ and $\delta>0$, we have
$$\lim_{N\to\infty}\mathbb{P}(\nu_Ng(t)(1-\epsilon)\leq Y_N(s_Nt)\leq \nu_N g(t)(1+\epsilon)\enspace for\enspace all \enspace t\in [\delta,\delta^{-1}])=1.$$
\end{Lem}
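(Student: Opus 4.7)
\textit{Proof proposal for Lemma \ref{approxbydeterministic}.} The plan is to leverage the three available ingredients—pointwise concentration (\ref{lemmanine1}), pointwise convergence of the normalized expectation (\ref{lemmanine2}), and monotonicity of $Y_N$—to bootstrap from a pointwise statement to the uniform statement on $[\delta,\delta^{-1}]$. Fix $\delta>0$ and $\epsilon>0$ throughout. Since $g$ is continuous and strictly positive on the compact interval $[\delta,\delta^{-1}]$, it is uniformly continuous there and attains a positive minimum $m:=\min_{t\in[\delta,\delta^{-1}]}g(t)>0$.

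First I would discretize. Choose a finite mesh $\delta=t_0<t_1<\cdots<t_K=\delta^{-1}$ fine enough that $g(t_{i+1})-g(t_i)<\eta\cdot m$ for all $i$, where $\eta>0$ is a small constant depending on $\epsilon$ to be chosen at the end. Such a mesh exists by uniform continuity of $g$. Fix a small $\eta'>0$ (also chosen at the end). By combining (\ref{lemmanine1}) with (\ref{lemmanine2}), for each mesh point $t_i$ we have
$$\lim_{N\to\infty}\mathbb{P}\bigl(\bigl|Y_N(s_Nt_i)-\nu_Ng(t_i)\bigr|\leq \eta'\nu_Ng(t_i)\bigr)=1.$$
Since the mesh is finite, the intersection $A_N$ of these $K+1$ events also satisfies $\mathbb{P}(A_N)\to 1$.

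Next I would use monotonicity of $Y_N$ to pass from the mesh to the full interval. On $A_N$, for any $t\in[t_i,t_{i+1}]$,
$$Y_N(s_Nt)\leq Y_N(s_Nt_{i+1})\leq (1+\eta')\nu_Ng(t_{i+1})\leq (1+\eta')\nu_N\bigl(g(t)+\eta m\bigr),$$
and similarly
$$Y_N(s_Nt)\geq Y_N(s_Nt_i)\geq (1-\eta')\nu_Ng(t_i)\geq (1-\eta')\nu_N\bigl(g(t)-\eta m\bigr).$$
Using $m\leq g(t)$ converts the additive $\eta m$ terms into a multiplicative factor $(1\pm\eta)$ on $g(t)$, so the two inequalities become
$$(1-\eta')(1-\eta)\nu_Ng(t)\leq Y_N(s_Nt)\leq (1+\eta')(1+\eta)\nu_Ng(t).$$
Choosing $\eta$ and $\eta'$ small enough that $(1+\eta')(1+\eta)\leq 1+\epsilon$ and $(1-\eta')(1-\eta)\geq 1-\epsilon$, and noting that these inequalities now hold simultaneously for every $t\in[\delta,\delta^{-1}]$ on the single event $A_N$, gives the desired conclusion.

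The main subtlety—and the only step that requires any care beyond bookkeeping—is converting between an additive comparison with $g$ (which the mesh argument naturally produces) and the multiplicative $(1\pm\epsilon)$ comparison in the statement. This is handled exactly by the observation that $g$ is bounded below by $m>0$ on $[\delta,\delta^{-1}]$, which is why the hypothesis that $g$ is continuous and strictly positive (rather than merely nonnegative) is essential. Everything else is routine: the monotonicity of $Y_N$ dispenses with the need for any modulus-of-continuity control on the process itself, so the sample paths need not be continuous.\QED
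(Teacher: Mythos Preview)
Your argument is correct. The paper itself does not supply a proof of this lemma; it is merely restated from \cite{fls} (their Lemma~9), so there is no in-paper proof to compare against. Your discretize-then-interpolate approach via monotonicity is exactly the standard way to upgrade pointwise control to uniform control for nondecreasing processes, and all the bookkeeping (positive minimum of $g$ on the compact interval, conversion from additive to multiplicative error, finite intersection of high-probability events) is handled cleanly.
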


Next, we state a criterion which guarantees that for fixed $t>0$, the probability $\mathbb{P}(\sigma_k>\beta_kt)$ converges to a deterministic function as $N\to\infty$.

\begin{Lem}
\label{theorem10tool}
For a continuous nonnegative function $g$, a sequence $(\nu_N)_{N=1}^{\infty}$ of positive real numbers, and $\delta,\epsilon>0$, define the event
$$B_N^{k-1}(\delta,\epsilon,g,\nu_N)=\{g(u)(1-\epsilon)\nu_N\leq Y_{k-1}(\beta_ku)\leq g(u)(1+\epsilon)\nu_N,\enspace for \enspace all \enspace u\in [\delta,\delta^{-1}] \}.$$
If $(\nu_N)_{N=1}^{\infty}$ and $g$ are chosen such that $\displaystyle \lim_{N\to\infty}\nu_N\beta_k\mu_k$ exists and $\displaystyle \lim_{N\to\infty}\mathbb{P}(B_N^{k-1}(\delta,\epsilon,g,\nu_N))=1$, then
$$\lim_{N\to\infty}\mathbb{P}(\sigma_k>\beta_kt)=\lim_{N\to\infty}\exp\Big(-\nu_N\beta_k\mu_k\int_{0}^{t}g(u)du\Big).$$
\end{Lem}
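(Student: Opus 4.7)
The plan is to start from the identity
$$\mathbb{P}(\sigma_k > \beta_k t) = \mathbb{E}\Big[\exp\Big(-\mu_k \int_0^{\beta_k t} Y_{k-1}(s)\,ds\Big)\Big],$$
which follows by the same Poisson argument used in Lemma \ref{computeexpectation}: the event $\{\sigma_k > \beta_k t\}$ is the event that $\Pi_k$ has no points in the $\sigma(\Pi_1,\dots,\Pi_{k-1})$-measurable region $\{(x,s) : x \in \psi_{k-1}(s),\, s \leq \beta_k t\}$, and nothing needs to be adjusted when $\sigma_{k-1} > \beta_k t$ since the integrand then vanishes. After the substitution $s = \beta_k u$, the right-hand side reads $\mathbb{E}[\exp(-a_N \int_0^t (Y_{k-1}(\beta_k u)/\nu_N)\,du)]$, where I set $a_N := \nu_N \beta_k \mu_k$ and $c := \lim_{N\to\infty} a_N \in [0,\infty]$. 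The goal is to show this expectation converges to $\exp(-c \int_0^t g(u)\,du)$.

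Next, I would fix $\epsilon \in (0,1)$ and $\delta \in (0,t)$ with $t < \delta^{-1}$, and work on the event $B_N := B_N^{k-1}(\delta,\epsilon,g,\nu_N)$, whose probability tends to $1$ by hypothesis. Split the integral as $\int_0^t = \int_0^\delta + \int_\delta^t$. On $[\delta,t] \subseteq [\delta,\delta^{-1}]$, the defining inequalities of $B_N$ give $(1-\epsilon)\nu_N \int_\delta^t g(u)\,du \leq \int_\delta^t Y_{k-1}(\beta_k u)\,du \leq (1+\epsilon)\nu_N \int_\delta^t g(u)\,du$. On $[0,\delta]$ I would use monotonicity of $Y_{k-1}$: on $B_N$, $Y_{k-1}(\beta_k u) \leq Y_{k-1}(\beta_k \delta) \leq (1+\epsilon) g(\delta)\nu_N$, so the contribution over $[0,\delta]$ is at most $(1+\epsilon)\delta g(\delta)\nu_N$ and at least $0$. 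Inserting these bounds into the exponent, and bounding the exponential above by $1$ off $B_N$, one obtains the sandwich
\begin{align*}
\exp\Big(-a_N(1+\epsilon)\Big[\delta g(\delta)+\int_\delta^t g(u)\,du\Big]\Big)\mathbb{P}(B_N) &\leq \mathbb{P}(\sigma_k > \beta_k t) \\
&\leq \exp\Big(-a_N(1-\epsilon)\int_\delta^t g(u)\,du\Big) + \mathbb{P}(B_N^c).
\end{align*}

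Finally, sending $N \to \infty$ (using $\mathbb{P}(B_N) \to 1$ and $a_N \to c$), then $\epsilon \to 0$, and finally $\delta \to 0$ (using continuity of $g$ at $0$, so that $\delta g(\delta) \to 0$ and $\int_\delta^t g(u)\,du \to \int_0^t g(u)\,du$), both sides collapse to $\exp(-c \int_0^t g(u)\,du) = \lim_{N\to\infty} \exp(-a_N \int_0^t g(u)\,du)$, which is the desired equality of limits. The main technical point, and the reason $B_N$ is formulated on $[\delta,\delta^{-1}]$ rather than on a neighborhood of $0$, is the control of the integrand near $u = 0$; monotonicity of $Y_{k-1}$ is precisely what allows the small-$u$ region to be absorbed into an error that vanishes as $\delta \to 0$.
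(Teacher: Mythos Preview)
Your proof is correct and follows the same overall sandwich strategy as the paper: start from the Poisson identity $\mathbb{P}(\sigma_k>\beta_kt)=\mathbb{E}\big[\exp\big(-\mu_k\int_0^{\beta_k t}Y_{k-1}(s)\,ds\big)\big]$, restrict to $B_N$, bound the integral over $[\delta,t]$ using the defining inequalities of $B_N$, and then let $N\to\infty$, $\epsilon\to 0$, $\delta\to 0$. The upper bound you obtain is exactly the paper's. The one point of difference is the treatment of $[0,\delta]$ in the lower bound. The paper imports from \cite{fls} the explicit Markov-type estimate
\[
\mathbb{P}(\sigma_k\leq\beta_k\delta)\leq \frac{\gamma_d^{k-1}(d!)^{k-1}}{((k-1)d+k)!}\,\delta^{(k-1)d+k},
\]
which relies on the specific formula for $\mathbb{E}[Y_{k-1}]$, and subtracts this from the lower bound. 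You instead bound $\int_0^\delta Y_{k-1}(\beta_k u)\,du\leq \delta\,Y_{k-1}(\beta_k\delta)\leq (1+\epsilon)\delta g(\delta)\nu_N$ on $B_N$ using monotonicity of $Y_{k-1}$, and absorb this directly into the exponent. Your route is more self-contained---it works for any continuous nonnegative $g$ as in the lemma statement, without appealing to the particular asymptotics of $\mathbb{E}[Y_{k-1}]$ from \cite{fls}---while the paper's route ties the argument back to that specific setting; both reach the same conclusion.
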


\begin{proof}
Suppose $\delta \leq t \leq \delta^{-1}$.  We reason as in the proof of Theorem 10 from \cite{fls}.  The upper and lower bounds from (26) and (27) in \cite{fls} are
$$\mathbb{P}(\sigma_k>\beta_k t)\leq \exp\Big(-\mu_k\beta_k\nu_N(1-\epsilon)\int_{\delta}^{t}g(u)du\Big)+\mathbb{P}(B_N^{k-1}(\delta,\epsilon,g,\nu_N)^c)$$ and
$$\mathbb{P}(\sigma_k>\beta_kt)\geq \mathbb{P}(B_N^{k-1}(\delta,\epsilon,g,\nu_N))\exp\Big(-\nu_N(1+\epsilon)\beta_k\mu_k\int_{\delta}^{t}g(u)du\Big)-\frac{\gamma_d^{k-1}(d!)^{k-1}}{(d(k-1)+k)!}\delta^{d(k-1)+k}.$$
Taking $N\to\infty$ and then $\epsilon,\delta\to 0$, we get the desired result.
\end{proof}

We also need to approximate the expected volume of type $k$ or higher regions, $\mathbb{E}[Y_k(t)]$, with a deterministic function, as well as making sure that $Y_k(t)$ is well-approximated by its expectation.  Lemma \ref{lemmafiveeight} below is a restatement of Lemmas 5 and 8 in \cite{fls}.  It is important to note that for this result, the time $t$ may depend on $N$.

\begin{Lem}
\label{lemmafiveeight}
Fix a positive integer $k$. Suppose $\mu_j\alpha^d t^{d+1}\to 0$ for all $j\in \{1,...,k\}$. Also suppose $t \leq N^{1/d}/(2\alpha)$. Then 
\begin{enumerate}
    \item Setting $\displaystyle v_k(t):=\frac{\gamma_d^k(d!)^k}{(k(d+1))!}\Big(\prod_{i=1}^{k}\mu_i\Big)N\alpha^{kd}t^{k(d+1)}$, we have $\mathbb{E}[Y_k(t)]\sim v_k(t).$
    \item If in addition we assume $\displaystyle \Big(\prod_{i=1}^{k}\mu_i\Big)N\alpha^{(k-1)d}t^{(k-1)d+k}\to\infty$, then for all $\epsilon>0$,
    $$\lim_{N\to\infty}\mathbb{P}((1-\epsilon)\mathbb{E}[Y_k(t)]\leq Y_k(t)\leq (1+\epsilon)\mathbb{E}[Y_k(t)])=1.$$
    \end{enumerate}
\end{Lem}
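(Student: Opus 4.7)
My strategy is to prove both parts by induction on $k$, exploiting the recursive identity
$$v_j(t) = \int_0^t \mu_j v_{j-1}(r) \gamma_d (\alpha(t-r))^d dr,$$
which follows directly from the definition in (\ref{informal}), together with a corresponding stochastic recursion coming from Campbell's formula. The base case $k=0$ is trivial since $Y_0(t) \equiv N = v_0(t)$.

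For part 1 (upper bound), I would observe that each point in $\psi_k(t)$ lies in some ball $B(y_k, \alpha(t-s_k))$ where $(y_k, s_k) \in \Pi_k$ is a space-time point at which a genuine type-$k$ mutation occurred, i.e., $y_k \in \psi_{k-1}(s_k)$. A union bound combined with Campbell's formula for the Poisson process $\Pi_k$, together with the fact that the restriction $t \leq N^{1/d}/(2\alpha)$ prevents these balls from wrapping and self-overlapping on the torus, yields
$$\mathbb{E}[Y_k(t)] \leq \int_0^t \mu_k \mathbb{E}[Y_{k-1}(s)] \gamma_d \alpha^d (t-s)^d \, ds,$$
and the inductive hypothesis $\mathbb{E}[Y_{k-1}(s)] \sim v_{k-1}(s)$ gives $\mathbb{E}[Y_k(t)] \leq (1+o(1)) v_k(t)$. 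For the matching lower bound, I would apply inclusion-exclusion, writing $Y_k(t) \geq \sum_{(y,s)} |B(y, \alpha(t-s))| - \sum_{\text{pairs}} |B_1 \cap B_2|$, where the first sum is over genuine type-$k$ mutation events contributing to $\psi_k(t)$. A second-moment Campbell computation bounds the expected pairwise overlap by a quantity of order $(\mu_k \alpha^d t^{d+1}) \cdot v_k(t)$, which is $o(v_k(t))$ precisely because $\mu_k \alpha^d t^{d+1} \to 0$ by hypothesis. This closes the induction for part 1.

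For part 2, I would apply Chebyshev's inequality, reducing the task to showing $\Var(Y_k(t)) = o(\mathbb{E}[Y_k(t)]^2)$. Expanding
$$\Var(Y_k(t)) = \iint_{\mathcal{T}^2} \Cov\bigl(\1\{T(x,t) \geq k\}, \1\{T(y,t) \geq k\}\bigr) \, dx\, dy,$$
the covariance at $(x,y)$ is nonzero only when chains reaching $x$ and $y$ share at least one Poisson point, since disjoint Poisson points are independent. Enumerating by which type-$j$ point is the first to be shared and performing the resulting iterated Dirichlet integrals, the dominant contribution comes from chains sharing only the final type-$k$ point, which forces both $x$ and $y$ to lie within a common ball of volume $\lesssim (\alpha t)^d$, yielding
$$\Var(Y_k(t)) \lesssim v_k(t) \cdot (\alpha t)^d.$$
Dividing by $\mathbb{E}[Y_k(t)]^2 \sim v_k(t)^2$ gives a ratio of order $(\alpha t)^d / v_k(t)$, which equals $\bigl(\prod_{i=1}^k \mu_i \cdot N \alpha^{(k-1)d} t^{(k-1)d+k}\bigr)^{-1}$ up to constants, and this tends to zero by the additional hypothesis in part 2.

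The main obstacle is the careful bookkeeping for the variance bound in part 2: one must enumerate the various ways that two chains can share Poisson points and verify that the dominant case is indeed the single-shared-terminal-point scenario. Other cases, such as sharing multiple points or sharing a lower-index type-$j$ point, produce strictly smaller contributions by the assumed decay of each factor $\mu_j \alpha^d t^{d+1}$. I expect the calculation to mirror Lemma 8 of \cite{fls} closely, with only notational modification required because the mutation rates $\mu_j$ need not all coincide in our setting.
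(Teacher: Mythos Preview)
The paper does not supply its own proof of this lemma: it is stated as a direct restatement of Lemmas~5 and~8 of \cite{fls}, with the added remark that the hypothesis $t \leq N^{1/d}/(2\alpha)$ (omitted in \cite{fls}) is needed so that equation~(15) there is exact. Your sketch is correct and follows precisely the strategy used in \cite{fls}: Campbell's formula and inclusion--exclusion for the mean in part~1, with the overlap controlled by $\mu_k\alpha^d t^{d+1}\to 0$, and Chebyshev with a covariance decomposition by shared Poisson points for part~2. Your identification of the dominant variance contribution and the resulting ratio $(\alpha t)^d/v_k(t)\asymp\big(\prod_i\mu_i\, N\alpha^{(k-1)d}t^{(k-1)d+k}\big)^{-1}$ is exactly the mechanism in \cite{fls}, and as you note, the only change from the equal-rate case is that products $\prod_{i=1}^k\mu_i$ replace powers $\mu^k$.
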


\begin{Rmk}
{\em Lemma 5 in \cite{fls} omits the necessary hypothesis $t \leq N^{1/d}/(2\alpha)$. This hypothesis ensures that a growing ball of mutations can not begin to wrap around the torus and overlap itself before time $t$, which is needed for the formula for $\mathbb{E}[\Lambda_{k-1}(t)]$ in equation (15) of \cite{fls} to be exact.  This equation is used in the proof of Lemma 5 of \cite{fls}.
Note that the hypothesis $t \leq N^{1/d}/(2\alpha)$ is also needed for Lemma 8 in \cite{fls}, because its proof uses Lemma 5 in \cite{fls}.  However, because it is easily verified that this hypothesis is satisfied in \cite{fls} whenever these lemmas are used, all of the main results in \cite{fls} are correct without additional hypotheses.}
\end{Rmk} 

The next lemma states that if $\mu_1\gg \alpha/N^{(d+1)/d}$, then $\beta_l$ is much smaller than the time it takes for a mutation to spread to the entire torus.

\begin{Lem}
\label{lemmathm3}
Suppose $\mu_1\gg \alpha/N^{(d+1)/d}$ and (\ref{increasing}) holds.  Then $\beta_l \ll N^{1/d}/\alpha$ for any $l\in \mathbb{N}$.
\end{Lem}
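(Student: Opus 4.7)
The plan is to reduce the claim to a product statement about the mutation rates by raising both sides to the positive power $(l-1)d+l$. Unpacking the definition $\beta_l = (N\alpha^{(l-1)d}\prod_{i=1}^l\mu_i)^{-1/((l-1)d+l)}$ from (\ref{betakay}) and simplifying, I expect to get
$$\Big(\frac{\alpha\beta_l}{N^{1/d}}\Big)^{(l-1)d+l} = \frac{\alpha^{l}}{N^{l(d+1)/d}\prod_{i=1}^l\mu_i} = \prod_{i=1}^l \frac{\alpha}{N^{(d+1)/d}\mu_i},$$
since the exponents combine as $(l-1)d+l - (l-1)d = l$ on $\alpha$ and $((l-1)d+l)/d + 1 = l(d+1)/d$ on $N$. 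Because $(l-1)d+l > 0$, the claim $\beta_l \ll N^{1/d}/\alpha$ is then equivalent to the product statement
$$\prod_{i=1}^l \frac{N^{(d+1)/d}\mu_i}{\alpha} \to \infty.$$

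Next, I would handle each factor in this product individually. For $i=1$, the hypothesis $\mu_1 \gg \alpha/N^{(d+1)/d}$ says exactly that $N^{(d+1)/d}\mu_1/\alpha \to \infty$. For $i \geq 2$, iterating the chain (\ref{increasing}) yields constants $c_2, \dots, c_l > 0$ (depending on $i$ but not on $N$) such that $\mu_i \geq c_i \mu_1$ for all sufficiently large $N$; hence $N^{(d+1)/d}\mu_i/\alpha \geq c_i N^{(d+1)/d}\mu_1/\alpha \to \infty$. Since a finite product of positive sequences each tending to $\infty$ tends to $\infty$, this gives the claim.

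The entire argument is essentially algebraic bookkeeping; the main point requiring care is the exponent accounting in the first displayed equation, where one must correctly combine the factor of $N$ buried inside $\beta_l$ with the $N^{((l-1)d+l)/d}$ coming from $(N^{1/d})^{(l-1)d+l}$. There is no deeper conceptual obstacle, and in particular no probabilistic input is needed beyond the hypothesis on $\mu_1$.
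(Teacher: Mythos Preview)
Your proof is correct and takes essentially the same approach as the paper: both reduce the claim, via the same exponent bookkeeping, to the product statement $\prod_{i=1}^{l}\mu_i\gg \alpha^l/N^{l(d+1)/d}$, and then deduce this from $\mu_i\gtrsim\mu_1\gg\alpha/N^{(d+1)/d}$ for each $i$. Your factorization of the product into $l$ individual terms is slightly more explicit than the paper's one-line deduction, but the argument is the same.
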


\begin{proof}
By (\ref{increasing}), we have $\mu_1,...,\mu_l\gg \alpha/N^{(d+1)/d}$. Thus
$$\prod_{i=1}^{l}\mu_i\gg \frac{\alpha^l}{N^{l(1+1/d)}}.$$
On the other hand by simplifying,
\begin{align*}
    \beta_l\ll \frac{N^{1/d}}{\alpha} & \iff N\alpha^{(l-1)d}\prod_{i=1}^{l}\mu_i\gg \Big(\frac{\alpha}{N^{1/d}}\Big)^{(l-1)d+l} \iff \prod_{i=1}^{l}\mu_i \gg \frac{\alpha^{l}}{N^{l(1+1/d)}}.
\end{align*}
This proves the lemma.
\end{proof}

\noindent \textit{Proof of Theorem 3.} In view of Lemma \ref{theorem10tool}, 
we will choose $(\nu_N)_{N=1}^{\infty}$ and a continuous nonnegative function $g_k$ such that $\lim_{N\to \infty}\nu_N\beta_k\mu_k$ exists and $\mathbb{P}(B_N^{k-1}(\delta,\epsilon,g_k,\nu_N))\to 1$ as $N\to \infty$. We set $\nu_N=1/(\beta_k \mu_k)$, and as in the proof of Theorem 10 in \cite{fls}, set
$$g_k(t):=\frac{\gamma_d^{k-1}(d!)^{k-1}t^{(k-1)(d+1)}}{((k-1)(d+1))!}.$$
A lengthy calculation shows that $\beta_k\mu_kv_{k-1}(\beta_kt)=g_k(t)$. On the other hand, by the last assumption in the theorem, we have $\mu_k\alpha^d\beta_{k-1}^{d+1}\to 0$. By Lemma \ref{hardestlemma}, this is equivalent to $\mu_k\alpha^d\beta_{k}^{d+1}\to 0$.  Because of (\ref{increasing}), this implies that 
$$\mu_j\alpha^d(\beta_k t)^{d+1}\to 0$$
for all $j\in \{1,...,k\}$. Also, because of Lemma \ref{lemmathm3}, we have $\beta_k\ll N^{1/d}/(2\alpha)$. Hence the hypotheses of Lemma \ref{lemmafiveeight} are satisfied, and by the first result in Lemma \ref{lemmafiveeight} applied to $k-1$, it follows that $v_{k-1}(\beta_k t)\sim\mathbb{E}[Y_{k-1}(\beta_kt)]$, which implies $$\lim_{N\to \infty}\beta_k\mu_k \mathbb{E}[Y_{k-1}(\beta_k t)]=\lim_{N\to\infty }\beta_k\mu_kv_{k-1}(\beta_kt)=g_k(t).$$
Hence, (\ref{lemmanine2}) of Lemma \ref{approxbydeterministic} is satisfied.
A direct calculation gives
$$\Big(\prod_{i=1}^{k-1}\mu_i\Big)N\alpha^{(k-2)d}\beta_k^{(k-2)d+k-1}=\frac{1}{\mu_k\alpha^d \beta_k^{d+1}}\to \infty,$$
which by the second result of Lemma \ref{lemmafiveeight} is sufficient to give (\ref{lemmanine1}).
Therefore, Lemma \ref{approxbydeterministic} guarantees that $\mathbb{P}(B_N^{k-1}(\delta,\epsilon,g_k,\nu_N))\to 1$ as $N\to\infty$. Then, Lemma \ref{theorem10tool} gives us
\begin{align*}
    \lim_{N\to\infty}\mathbb{P}(\sigma_k>\beta_k t) & = \lim_{N\to\infty } \exp\Big(-\nu_N\beta_k\mu_k\int_{0}^{t}g_k(u)du\Big)
    \\ & = \exp\Big(-\int_{0}^{t}\frac{\gamma_d^{k-1}(d!)^{k-1}u^{(k-1)(d+1)}}{((k-1)(d+1))!}du\Big)
    \\ & = \exp\Big(-\frac{\gamma_d^{k-1}(d!)^{k-1}}{(d(k-1)+k)!}t^{d(k-1)+k}\Big)
\end{align*}
finishing the proof. \QED{}

\subsection{Proof of Theorem 4}
Now we turn to proving Theorem 4, which is a hybrid of Theorems 2 and 3. In particular, we assume that there is some $l\in\mathbb{N}$ such that the mutation rates $\mu_1,\mu_2,...,\mu_l$ fall under the regime of Theorem 3, and all subsequent mutation rates $\mu_{l+1},...,\mu_k$ are large enough so that all mutations after the first type $l$ mutation occur quickly, as in Theorem 2.
\\~\\
\noindent \textit{Proof of Theorem 4.}
For ease of notation, set, for $j\in\mathbb{N}$ and $t\geq 0$,
\begin{align*}
    f_j(t):=\exp\Big(-\frac{\gamma_d^{j-1}(d!)^{j-1}t^{d(j-1)+j}}{(d(j-1)+j)!}\Big) 
\end{align*}
For $\epsilon>0$, we have the inequalities
$$\mathbb{P}(\sigma_l>\beta_l t)\leq \mathbb{P}(\sigma_k>\beta_l t)\leq \PP(\sigma_l>\beta_l(t-\epsilon))+\PP(\sigma_k-\sigma_l>\beta_l\epsilon).$$
Taking $N\to \infty$ and using Theorem 3 (noting that $l\geq 2$), we have
$$f_l(t)\leq \lim_{N\to\infty} \PP(\sigma_k>\beta_l t)\leq f_l(t-\epsilon )+\lim_{N\to \infty}\PP(\sigma_k-\sigma_l>\beta_l\epsilon).$$
Since $f_l$ is continuous, the result follows (by taking $\epsilon\to 0$) once we show that for each fixed $\epsilon>0$
\begin{equation}\label{sigmaconvprob}
\lim_{N\to\infty}\PP(\sigma_k-\sigma_l>\beta_l\epsilon)=0.
\end{equation}
Notice that because
$$\{\sigma_k-\sigma_l>\beta_l\epsilon\}\subseteq \bigcup_{j=l}^{k-1}\Big\{\sigma_{j+1}-\sigma_j>\frac{\beta_l\epsilon}{k-l}\Big\}$$
it suffices to show that for all $j\in \{l,...,k-1\}$ we have
$$\PP(\sigma_{j+1}-\sigma_j>\beta_l\epsilon)\to  0.$$
By Lemma \ref{computeexpectation}, we have 
\begin{align*}
    \PP(\sigma_{j+1}-\sigma_j>\beta_l\epsilon) & = \mathbb{E}\Bigg[\exp\Big(-\int_{\sigma_j}^{\beta_l\epsilon+\sigma_j}\mu_{j+1}Y_j(s)ds\Big)\Bigg].
\end{align*}
Hence, by the dominated convergence theorem, to show that
$\PP(\sigma_{j+1}-\sigma_j>\beta_l\epsilon)\to 0$, it suffices to show that
$$\int_{\sigma_j}^{\beta_l\epsilon+\sigma_j}\mu_{j+1}Y_j(s)ds\to \infty \quad \text{a.s.}$$
By Lemma \ref{lemmathm3} we have $\beta_l\ll N^{1/d}/\alpha$, so $ \beta_l\epsilon \leq N^{1/d}/(2\alpha)$ for large enough $N$. That is, $\beta_l\epsilon$ does not exceed the time it takes for a mutation to wrap around the torus. Hence, we have the lower bound $Y_j(s)\geq \gamma_d\alpha^d (s-\sigma_j)^d$ for $s\in [\sigma_j,\sigma_j+\beta_l\epsilon]$, and
\begin{align}\label{newmuY}
    \int_{\sigma_j}^{\beta_l\epsilon+\sigma_j}\mu_{j+1}Y_j(s)ds & \geq \int_{\sigma_j}^{\beta_l\epsilon+\sigma_j}\mu_{j+1}\gamma_d\alpha^d(s-\sigma_j)^d = \frac{\mu_{j+1}\gamma_d \alpha^d}{d+1}(\beta_l \epsilon)^{d+1}.
\end{align}
By the second assumption in the theorem, we have $\mu_{l+1}\gg 1/(\alpha^d\beta_l^{d+1})$. Because of (\ref{increasing}), we have $\mu_{j+1}\gg 1/(\alpha^d\beta_l^{d+1})$. It follows that the right-hand side of (\ref{newmuY}) tends to infinity as $N\to \infty$, which completes the proof. \QED{}

\section{Proofs of limit theorems for distances between mutations}\label{distanceproof}

\noindent For each $j\geq 1$, we define $\sigma_j^{(2)}$ to be the time at which the second type $j$ mutation occurs, i.e.
$$\sigma_j^{(2)}:=\inf\{t>\sigma_j: (x, t) \in \Pi_j \mbox{ and } x \in \psi_{j-1}(t) \mbox{ for some }x \in \mathcal{T} \}.$$
Note that $\sigma_j^{(2)}$ is defined to be the first time, after time $\sigma_j$, that a point of $\Pi_j$ lands in a region of type $j-1$ or higher.  If this point lands in a region of type $j-1$, then a new type $j$ ball will begin to grow.  If this point lands in a region of type $j$ or higher, then the evolution of the process is unaffected.
Also recall from (\ref{betakay}) that
$$\kappa_j:=(\mu_j\alpha^d)^{-1/(d+1)}.$$

\subsection{Proof of Theorem 5}

We begin with an upper bound for the time between first mutations of consecutive types.

\begin{Lem}
\label{nestedballs0}
Assume $\mu_1\gg \alpha/N^{(d+1)/d}$. Suppose $i$ and $j$ are positive integers.  Then for each fixed $t>0$, we have, for all sufficiently large $N$,
$$\mathbb{P}(\sigma_{j+1}-\sigma_j>\kappa_{i}t)\leq \exp\Big(-\frac{\gamma_d}{d+1}\cdot \frac{\mu_{j+1}}{\mu_i}t^{d+1}\Big).$$
\end{Lem}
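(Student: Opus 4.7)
The plan is to mimic the approach used in the proof of Theorem 2 (and the analogous step inside the proof of Theorem 4), but with the time window $\kappa_i t$ in place of $t/(N\mu_1)$ or $\beta_l \epsilon$. First I would apply Lemma \ref{computeexpectation} with indices shifted by one, using the $\sigma(\Pi_1,\dots,\Pi_j)$-measurable random time $t_N = \sigma_j + \kappa_i t$, to write
\begin{equation*}
\mathbb{P}(\sigma_{j+1}-\sigma_j>\kappa_i t) = \mathbb{E}\Bigg[\exp\Big(-\int_{\sigma_j}^{\sigma_j+\kappa_i t}\mu_{j+1}Y_j(s)\,ds\Big)\Bigg].
\end{equation*}
The strategy is then to lower-bound $Y_j(s)$ by the volume $\gamma_d\alpha^d(s-\sigma_j)^d$ of the single ball of type $j$ mutations that grows from the first type $j$ mutation at time $\sigma_j$, for $s\in[\sigma_j,\sigma_j+\kappa_i t]$, and then compute the resulting integral.

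The main technical point, and arguably the only real obstacle, is justifying that this single-ball lower bound is valid throughout the interval $[\sigma_j,\sigma_j+\kappa_i t]$, i.e.\ that the ball has not yet wrapped around the torus and overlapped itself. By (\ref{fixation time}), this reduces to checking that $\kappa_i t \leq N^{1/d}/(2\alpha)$ for all sufficiently large $N$, which is equivalent to $\mu_i \alpha^d \gtrsim \alpha^{d+1}/N^{(d+1)/d}$ up to a constant depending on $t$. Since (\ref{increasing}) gives $\mu_i \gtrsim \mu_1$ and the hypothesis of the lemma gives $\mu_1 \gg \alpha/N^{(d+1)/d}$, we get $\mu_i \gg \alpha/N^{(d+1)/d}$, which is precisely what is needed. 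This is where the assumption $\mu_1\gg\alpha/N^{(d+1)/d}$ enters; without it, the ball could wrap around before time $\kappa_i t$ and the lower bound would fail.

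Once this geometric check is in place, the rest is a routine calculation. Using the single-ball lower bound,
\begin{equation*}
\int_{\sigma_j}^{\sigma_j+\kappa_i t}\mu_{j+1}Y_j(s)\,ds \geq \int_{0}^{\kappa_i t}\mu_{j+1}\gamma_d\alpha^d u^d\,du = \frac{\mu_{j+1}\gamma_d\alpha^d}{d+1}(\kappa_i t)^{d+1}.
\end{equation*}
Substituting $\kappa_i^{d+1} = 1/(\mu_i\alpha^d)$ from the definition in (\ref{betakay}) gives
\begin{equation*}
\int_{\sigma_j}^{\sigma_j+\kappa_i t}\mu_{j+1}Y_j(s)\,ds \geq \frac{\gamma_d}{d+1}\cdot\frac{\mu_{j+1}}{\mu_i}t^{d+1},
\end{equation*}
a deterministic lower bound. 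Plugging this into the expectation above and using monotonicity of $x\mapsto e^{-x}$ yields the claimed inequality, and since the right-hand side no longer depends on the randomness, the outer expectation drops out.
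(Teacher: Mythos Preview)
Your proposal is correct and follows essentially the same approach as the paper: apply Lemma~\ref{computeexpectation}, use the assumption $\mu_1\gg\alpha/N^{(d+1)/d}$ together with (\ref{increasing}) to guarantee $\kappa_i t < L/(2\alpha)$ for large $N$, lower-bound $Y_j(s)$ by the single growing ball $\gamma_d\alpha^d(s-\sigma_j)^d$, and evaluate the integral using $\kappa_i^{d+1}=1/(\mu_i\alpha^d)$. The paper's proof is organized identically, with the same geometric check and the same computation.
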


\begin{proof}
Using Lemma \ref{computeexpectation}, we have
$$\mathbb{P}(\sigma_{j+1}-\sigma_j>\kappa_i t)=\mathbb{E}\bigg[\exp\Big(-\int_{\sigma_j}^{\sigma_j+\kappa_{i}t}\mu_{j+1}Y_j(s)ds\Big)\bigg].$$
Because of $\mu_1\gg \alpha/N^{(d+1)/d}$ and (\ref{increasing}), for all sufficiently large $N$ we have $\kappa_{i}t<L/(2\alpha)$. Thus, $Y_j(s)\geq \gamma_d \alpha^d(s-\sigma_j)^d$ for $s\in [\sigma_j,\sigma_j+\kappa_{i}t]$. Then
\begin{align}
    \mathbb{P}(\sigma_{j+1}-\sigma_j>\kappa_i t)& \leq \exp\Big(-\int_{\sigma_j}^{\sigma_j+\kappa_{i}t}\mu_{j+1}\gamma_d\alpha^d(s-\sigma_j)^{d}ds\Big) \nonumber
    \\ & = \exp\Big(-\int_{0}^{\kappa_{i}t}\mu_{j+1}\gamma_d(\alpha s)^d ds\Big) \nonumber 
    \\  & = \exp\Big(-\frac{\mu_{j+1}\gamma_d\alpha^d}{d+1}(\kappa_{i}t)^{d+1}\Big) \nonumber
    \\ & = \exp\Big(-\frac{\gamma_d}{d+1}\cdot \frac{\mu_{j+1}}{\mu_i}t^{d+1}\Big). \nonumber
\end{align}
This finishes the proof.
\end{proof}

By Lemma \ref{nestedballs0}, when $\mu_1\gg \alpha/N^{(d+1)/d}$ the interarrival time $\sigma_{j}-\sigma_{j-1}$ is at most the same order of magnitude as $\kappa_{j}$. Lemma \ref{nestedballs1} further shows that if in addition $\mu_j\ll \mu_{j+1}\ll \mu_{j+2}\ll \cdots$, then mutations of type $m > j$ appear on an even faster time scale. 

\begin{Lem}
\label{nestedballs1}
Assume $\mu_1\gg \alpha/N^{(d+1)/d}$.  Suppose $j$ is a positive integer, and $\mu_j\ll \mu_{j+1} \ll \mu_{j+2} \ll \cdots$.  Then $(\sigma_m-\sigma_j)/\kappa_j\to_p 0$ for every $m>j$.
\end{Lem}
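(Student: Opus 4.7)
The plan is to reduce the claim to a finite union bound over consecutive interarrival times $\sigma_{i+1}-\sigma_i$ for $i \in \{j, j+1, \dots, m-1\}$, and then control each of these interarrival times using Lemma \ref{nestedballs0}. Specifically, for any $\epsilon > 0$ we have the telescoping inclusion
$$\{\sigma_m - \sigma_j > \kappa_j \epsilon\} \subseteq \bigcup_{i=j}^{m-1} \Big\{\sigma_{i+1} - \sigma_i > \frac{\kappa_j \epsilon}{m-j}\Big\},$$
so by the union bound it suffices to show that $\PP(\sigma_{i+1}-\sigma_i > \kappa_j t) \to 0$ as $N \to \infty$ for each fixed $t > 0$ and each $i \in \{j, \dots, m-1\}$.

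For the individual bound, I would apply Lemma \ref{nestedballs0} directly, with the roles of the indices chosen so that the scaling constant is $\kappa_j$ while the ``new'' mutation has rate $\mu_{i+1}$. That lemma then gives, for all $N$ sufficiently large,
$$\PP(\sigma_{i+1}-\sigma_i > \kappa_j t) \leq \exp\Big(-\frac{\gamma_d}{d+1}\cdot \frac{\mu_{i+1}}{\mu_j} t^{d+1}\Big).$$
By hypothesis $\mu_j \ll \mu_{j+1} \ll \mu_{j+2} \ll \cdots$, and iterating this gives $\mu_{i+1}/\mu_j \to \infty$ for every $i \geq j$. Therefore the exponent tends to $-\infty$, which forces the right-hand side to tend to $0$. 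Applying this with $t = \epsilon/(m-j)$ to each of the finitely many terms in the union bound yields
$$\lim_{N\to\infty} \PP(\sigma_m - \sigma_j > \kappa_j \epsilon) = 0,$$
which is precisely convergence in probability of $(\sigma_m - \sigma_j)/\kappa_j$ to $0$.

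The main obstacle is essentially nonexistent here, since Lemma \ref{nestedballs0} is already tailored to the present setting and the only substantive input beyond it is checking that the ratio $\mu_{i+1}/\mu_j$ diverges. The only mild care to take is making sure to invoke Lemma \ref{nestedballs0} with the \emph{small} rate $\mu_j$ as the normalization index (producing the factor $\kappa_j$) and the \emph{large} rate $\mu_{i+1}$ in the exponent, so that the divergence of $\mu_{i+1}/\mu_j$ actually kills the probability; reversing the roles would not give the required decay.
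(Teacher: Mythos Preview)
Your proof is correct and follows essentially the same approach as the paper: telescope $\sigma_m-\sigma_j$ into consecutive interarrival times and control each with Lemma~\ref{nestedballs0}. The only cosmetic difference is that the paper first rescales from $\kappa_j$ to $\kappa_i$ (using $\kappa_i\lesssim\kappa_j$) and then invokes Lemma~\ref{nestedballs0} with ratio $\mu_{i+1}/\mu_i$, whereas you apply Lemma~\ref{nestedballs0} directly at scale $\kappa_j$ to obtain the ratio $\mu_{i+1}/\mu_j$; both routes are valid and neither is materially shorter.
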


\begin{proof}
Using (\ref{increasing}), we get
$$\frac{\sigma_m-\sigma_j}{\kappa_j}=\sum_{i=j}^{m-1}\frac{\sigma_{i+1}-\sigma_i}{\kappa_j} \lesssim 
\sum_{i=j}^{m-1}\frac{\sigma_{i+1}-\sigma_i}{\kappa_i},$$
so it suffices to show that $(\sigma_{i+1} - \sigma_i)/\kappa_i \rightarrow_p 0$ for all $i \in \{j, j+1, \dots, m-1\}$.
Let $\epsilon>0$. Using Lemma \ref{nestedballs0}, we have, for all sufficiently large $N$,
$$\mathbb{P}(\sigma_{i+1}-\sigma_i>\kappa_i\epsilon)\leq \exp\Big(-\frac{\gamma_d}{d+1}\cdot \frac{\mu_{i+1}}{\mu_{i}}\epsilon^{d+1}\Big).$$
Then $\mathbb{P}(\sigma_{j+1}-\sigma_j>\kappa_j\epsilon)\to 0$ because $\mu_{j+1}/\mu_j\to \infty$, as desired.
\end{proof}

Next, we want to show that the balls from different mutation types become nested, as in Figure 2. That is, for any $i\geq 1$ and $j>i$, we have $\mathbb{P}(\sigma_j<\sigma_i^{(2)})\to 1$, meaning that a type $j$ mutation appears before a second type $i$ mutation can appear. We first prove the case when $i=1$ in Lemma \ref{nestedballs2} below, assuming the same hypotheses as in Theorem \ref{theorem2}.

\begin{Lem}
\label{nestedballs2}
Suppose (\ref{increasing}) holds, and suppose $\mu_1 \gg \alpha/N^{(d+1)/d}$ and $\mu_2\gg (N\mu_1)^{d+1}/\alpha^d$. Then
\begin{enumerate}
    \item For all $t>0$, we have $\mathbb{P}(\sigma_2+\kappa_2t<\sigma_1^{(2)})\to 1$.
    \item For every $j\geq 2$, we have $\mathbb{P}(\sigma_j<\sigma_1^{(2)})\to 1$.
\end{enumerate}
\end{Lem}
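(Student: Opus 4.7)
The plan is to exploit a sharp separation of time scales: $\sigma_1^{(2)} - \sigma_1$ is exponential with rate $N\mu_1$, hence of order $1/(N\mu_1)$, while $\sigma_2 - \sigma_1$ lives on the much shorter scale $\kappa_2$ by Lemma \ref{nestedballs0}. The hypothesis $\mu_2 \gg (N\mu_1)^{d+1}/\alpha^d$, after raising both sides to the power $1/(d+1)$, is exactly the statement $N\mu_1\kappa_2 \to 0$.

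For part 1, first note that $\sigma_1^{(2)}$ is the time of the second point of $\Pi_1$ projected onto the time axis, which is itself a Poisson process with rate $N\mu_1$; thus by the memoryless property $\sigma_1^{(2)} - \sigma_1 \sim \text{Exponential}(N\mu_1)$. Fix $t > 0$. For any $M > 0$, the set inclusion
$$\{\sigma_2 + \kappa_2 t \geq \sigma_1^{(2)}\} \subseteq \{\sigma_2 - \sigma_1 \geq M\kappa_2\} \cup \{\sigma_1^{(2)} - \sigma_1 \leq (M+t)\kappa_2\}$$
holds, since on the complementary event one has $\sigma_2 + \kappa_2 t < \sigma_1 + (M+t)\kappa_2 < \sigma_1^{(2)}$. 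The probability of the second event on the right equals $1 - \exp(-N\mu_1(M+t)\kappa_2) \to 0$, while the first has probability at most $\exp(-\gamma_d M^{d+1}/(d+1))$ by Lemma \ref{nestedballs0} applied with $i = 2$, $j = 1$, and the lemma's time variable equal to $M$. Sending $N \to \infty$ for fixed $M$, then letting $M \to \infty$, yields $\mathbb{P}(\sigma_2 + \kappa_2 t \geq \sigma_1^{(2)}) \to 0$.

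For part 2, I would bootstrap from part 1. The case $j = 2$ follows directly from part 1 with $t = 1$. For $j > 2$, use the inclusion $\{\sigma_j \geq \sigma_1^{(2)}\} \subseteq \{\sigma_j - \sigma_2 > C\kappa_2\} \cup \{\sigma_2 + C\kappa_2 \geq \sigma_1^{(2)}\}$; it then suffices to show that $\mathbb{P}(\sigma_j - \sigma_2 > C\kappa_2)$ can be made uniformly small for $N$ large by choosing $C$ large, since part 1 with $t = C$ handles the other piece. Writing $\sigma_j - \sigma_2 = \sum_{m=2}^{j-1}(\sigma_{m+1} - \sigma_m)$ and applying a union bound, Lemma \ref{nestedballs0} gives
$$\mathbb{P}\Bigl(\sigma_{m+1} - \sigma_m > \frac{C}{j-2}\kappa_2\Bigr) \leq \exp\Bigl(-\frac{\gamma_d}{d+1} \cdot \frac{\mu_{m+1}}{\mu_2} \Bigl(\frac{C}{j-2}\Bigr)^{d+1}\Bigr).$$
Since (\ref{increasing}) implies $\mu_2 \lesssim \mu_{m+1}$ for every $m \geq 2$, the ratio $\mu_{m+1}/\mu_2$ is bounded below by a positive constant for large $N$, so each summand tends to zero as $C \to \infty$. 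Summing over the finitely many $m$ and sending $C \to \infty$ completes the argument.

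The main obstacle is really just bookkeeping rather than a serious probabilistic difficulty: the inclusions and union bounds are elementary, and no delicate independence argument is needed once one observes that $\sigma_1^{(2)} - \sigma_1 \sim \text{Exponential}(N\mu_1)$ has a fixed marginal distribution. The only subtle point is the order of limits --- one must fix $M$ (or $C$) before taking $N \to \infty$, and only afterwards let $M$ (or $C$) tend to infinity.
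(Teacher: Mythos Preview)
Your proof is correct and rests on the same idea as the paper's---the separation between the $\kappa_2$ time scale of $\sigma_2-\sigma_1$ and the $1/(N\mu_1)$ time scale of $\sigma_1^{(2)}-\sigma_1$---but the packaging differs. The paper rescales everything by $N\mu_1$: it invokes the proof of Theorem~\ref{theorem2} to get $N\mu_1(\sigma_2-\sigma_1)\to_p 0$ (and more generally $N\mu_1(\sigma_j-\sigma_1)\to_p 0$), then compares both sides against a single threshold $\epsilon$ using that $N\mu_1(\sigma_1^{(2)}-\sigma_1)\sim\text{Exponential}(1)$, yielding the result after a single limit in $N$. You instead work on the $\kappa_2$ scale, use Lemma~\ref{nestedballs0} for an explicit tail bound on $\sigma_2-\sigma_1$, and then take a double limit $N\to\infty$ followed by $M\to\infty$ (or $C\to\infty$). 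Your route is slightly more self-contained since it does not reach back into the proof of Theorem~\ref{theorem2}, at the cost of carrying the extra parameter; the paper's route is a line or two shorter because the convergence in probability is already on the shelf.
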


\begin{proof} To prove the first statement, let $\epsilon>0$.  It was shown in the proof of Theorem 2 that  $N\mu_1(\sigma_2-\sigma_1)\to_p 0$. Also, the assumption $\mu_2\gg (N\mu_1)^{d+1}/\alpha^d$ implies $N\mu_1\kappa_2 t\to 0$. Thus, we have $N\mu_1(\sigma_2-\sigma_1)+N\mu_1\kappa_2t\to_p 0.$ Therefore, for all sufficiently large $N$,
$$\mathbb{P}(N\mu_1(\sigma_2-\sigma_1)+N\mu_1\kappa_2 t<\epsilon)> 1-\epsilon.$$
On the other hand, because $N\mu_1(\sigma_1^{(2)}-\sigma_1)$ has an $\text{Exponential}(1)$ distribution, we have
$$\mathbb{P}(N\mu_1(\sigma_1^{(2)}-\sigma_1)>\epsilon)=e^{-\epsilon}>1-\epsilon.$$
Combining the above, we find
\begin{align*}
    \mathbb{P}(\sigma_2+\kappa_2 t<\sigma_1^{(2)}) \geq \mathbb{P}(N\mu_1(\sigma_2-\sigma_1)+N\mu_1\kappa_2 t<\epsilon <N\mu_1(\sigma_1^{(2)}-\sigma_1)) > 1-2\epsilon.
\end{align*}
This proves the first statement. For the second statement, by the proof of Theorem $2$ again, we have $N\mu_1(\sigma_i-\sigma_{i-1})\to_p 0$ for every $i\geq 2$. Thus,
$$N\mu_1(\sigma_j-\sigma_1)=\sum_{i=2}^{j}N\mu_1(\sigma_i-\sigma_{i-1})\to_p 0$$
and the rest of the proof is essentially the same as that of the first statement; we just replace $N\mu_1(\sigma_2-\sigma_1)+N\mu_1\kappa_2 t$ with $N\mu_1(\sigma_j-\sigma_1)$. 
\end{proof}


In Lemma \ref{nestedballs3}, we establish that $\sigma_{j}^{(2)}-\sigma_{j}>\kappa_{j}\delta$ with high probability.  Then for $k>j$, since $(\sigma_{k}-\sigma_j)/\kappa_{j}\to_p 0$ by Lemma \ref{nestedballs1}, it will follow that $\sigma_{k}-\sigma_j<\kappa_{j}\delta$ with high probability.  It will then follow that $\mathbb{P}(\sigma_{k}<\sigma_j^{(2)})\to 1$, which we show in Lemma \ref{nestedballs5}.

\begin{Lem} \label{nestedballs3}
Suppose (\ref{increasing}) holds, and suppose $\mu_1\gg \alpha/N^{(d+1)/d}$. Let $i \geq 2$.  Define the events $A:=\{\sigma_i-\sigma_{i-1}<\kappa_i t\}$ and $B:=\{\sigma_{i}+\kappa_{i}\delta<\sigma_{i-1}^{(2)}\}$.
Then for any fixed $t>0$ and $\delta>0$, for sufficiently large $N$ we have
$$\mathbb{P}(\sigma_{i}^{(2)}-\sigma_{i}>\kappa_{i}\delta) \geq \exp\Big(-\frac{\gamma_d}{d+1}[(t+\delta)^{d+1}-t^{d+1}]\Big) - \mathbb{P}(A^c) - \mathbb{P}(B^c).$$
\end{Lem}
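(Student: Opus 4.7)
The plan is to condition on $\mathcal{G}:=\sigma(\Pi_1,\dots,\Pi_{i-1})$ together with $\sigma_i$, and use the Poisson structure of $\Pi_i$ to express $\mathbb{P}(\sigma_i^{(2)}-\sigma_i>\kappa_i\delta\mid\mathcal{G},\sigma_i)$ in closed form. The most delicate step is this conditional representation: because $\psi_{i-1}(s)$ depends only on $\Pi_1,\dots,\Pi_{i-1}$ and $\Pi_i$ is independent of these, thinning $\Pi_i$ to the $\mathcal{G}$-measurable region $\{(x,s):x\in\psi_{i-1}(s)\}$ gives, conditional on $\mathcal{G}$, an inhomogeneous Poisson process on $[0,\infty)$ with rate $\mu_iY_{i-1}(s)$, whose first two arrival times are exactly $\sigma_i$ and $\sigma_i^{(2)}$. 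The strong Markov property for this process then yields
$$\mathbb{P}\bigl(\sigma_i^{(2)}-\sigma_i>\kappa_i\delta\,\big|\,\mathcal{G},\sigma_i\bigr)=\exp\Big(-\int_{\sigma_i}^{\sigma_i+\kappa_i\delta}\mu_iY_{i-1}(s)\,ds\Big).$$

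Next, on the event $A\cap B$ I bound this integral. Event $B$ ensures that throughout $[\sigma_i,\sigma_i+\kappa_i\delta]$ the region $\psi_{i-1}(s)$ consists solely of the ball grown from the initial type $i-1$ mutation, so its volume equals $\gamma_d(\alpha(s-\sigma_{i-1}))^d$ provided the ball has not wrapped around the torus. Non-wrapping follows from $\mu_1\gg\alpha/N^{(d+1)/d}$ and (\ref{increasing}), which give $\mu_i\gg\alpha/N^{(d+1)/d}$ and hence $\alpha\kappa_i(t+\delta)\leq L/2$ for all sufficiently large $N$, by the same calculation as in Lemma \ref{lemmathm3}. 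On $A$, we have $s-\sigma_{i-1}\leq\kappa_i t+(s-\sigma_i)$, so substituting $u=s-\sigma_i$ and using the identity $\mu_i\alpha^d\kappa_i^{d+1}=1$,
$$\int_{\sigma_i}^{\sigma_i+\kappa_i\delta}\mu_iY_{i-1}(s)\,ds\leq\int_0^{\kappa_i\delta}\mu_i\gamma_d\alpha^d(\kappa_i t+u)^d\,du=\frac{\gamma_d}{d+1}\bigl[(t+\delta)^{d+1}-t^{d+1}\bigr]=:X.$$

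Combining these two steps, $\mathbb{P}(\sigma_i^{(2)}-\sigma_i>\kappa_i\delta\mid\mathcal{G},\sigma_i)\geq e^{-X}$ on $A\cap B$. Taking expectations and using $\mathbb{P}(A\cap B)\geq 1-\mathbb{P}(A^c)-\mathbb{P}(B^c)$ together with $e^{-X}\leq 1$,
$$\mathbb{P}(\sigma_i^{(2)}-\sigma_i>\kappa_i\delta)\geq\mathbb{P}(A\cap B)\,e^{-X}\geq\bigl(1-\mathbb{P}(A^c)-\mathbb{P}(B^c)\bigr)e^{-X}\geq e^{-X}-\mathbb{P}(A^c)-\mathbb{P}(B^c),$$
which is the claimed inequality. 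The main obstacle is the bookkeeping needed to justify the conditional Poisson representation of $\sigma_i^{(2)}-\sigma_i$; once that is in place, the remainder of the argument is a direct geometric volume computation powered by the two events $A$ and $B$.
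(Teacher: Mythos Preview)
Your proof is correct and follows essentially the same route as the paper's. The paper obtains the identity
$\mathbb{P}(\sigma_i^{(2)}-\sigma_i>\kappa_i\delta)=\mathbb{E}\bigl[\exp\bigl(-\int_0^{\kappa_i\delta}\mu_iY_{i-1}(s+\sigma_i)\,ds\bigr)\bigr]$
by ``reasoning as in the proof of Lemma~\ref{computeexpectation}'', then bounds $Y_{i-1}(s+\sigma_i)\le\gamma_d\alpha^d(s+\kappa_i t)^d$ on $A\cap B$ for large $N$, and finishes with the pointwise inequality $\exp(\cdots)\ge e^{-X}\mathbf{1}_{A\cap B}\ge e^{-X}-\mathbf{1}_{A^c}-\mathbf{1}_{B^c}$ before taking expectations; your argument is the same computation, with the added virtue that you spell out the conditioning on $\sigma(\mathcal{G},\sigma_i)$ and the strong Markov property for the thinned Poisson process, which the paper leaves implicit (note that $\sigma_i$ is not $\mathcal{G}$-measurable, so the appeal to Lemma~\ref{computeexpectation} does require the extra step you supply).
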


\begin{proof}
Reasoning as in the proof of Lemma \ref{computeexpectation}, we have
\begin{equation}\label{EY1}
\mathbb{P}(\sigma_{i}^{(2)}-\sigma_{i}>\kappa_{i}\delta)= \mathbb{E} \bigg[ \exp\Big(-\int_{0}^{\kappa_{i}\delta}\mu_{i}Y_{i-1}(s+\sigma_{i})ds\Big) \bigg].
\end{equation}
Because of $\mu_1\gg \alpha/N^{(d+1)/d}$ and (\ref{increasing}), for sufficiently large $N$ we have $\kappa_i(t+\delta)<L/(2\alpha)$. Thus, on the event $A$, we have
$\kappa_i\delta+(\sigma_i-\sigma_{i-1})<L/(2\alpha)$.  Therefore, on the event $A \cap B$, for all $s\in [0,\kappa_i\delta]$, we have for sufficiently large $N$,
\begin{align*}
Y_{i-1}(s+\sigma_i) = \gamma_d\alpha^d(s+\sigma_i-\sigma_{i-1})^{d}\leq \gamma_d\alpha^d(s+\kappa_it)^{d}.
\end{align*}
Thus, for sufficiently large $N$,
\begin{align} \label{expbound}
\exp\Big(-\int_{0}^{\kappa_{i}\delta}\mu_{i}Y_{i-1}(s+\sigma_{i})ds\Big) &\geq \exp\Big(-\int_0^{\kappa_{i}\delta}\mu_{i}\gamma_d\alpha^d(s+\kappa_{i}t)^d ds\Big) \mathbf{1}_{A \cap B} \nonumber \\
&\geq \exp\Big(-\int_0^{\kappa_{i}\delta}\mu_{i}\gamma_d\alpha^d(s+\kappa_{i}t)^d ds\Big) - \mathbf{1}_{A^c} - \mathbf{1}_{B^c}.
\end{align}
It follows from (\ref{EY1}) and (\ref{expbound}) that for sufficiently large $N$,
\begin{align*}
\mathbb{P}(\sigma_{i}^{(2)}-\sigma_{i}>\kappa_{i}\delta) & \geq \exp\Big(-\int_0^{\kappa_{i}\delta}\mu_{i}\gamma_d\alpha^d(s+\kappa_{i}t)^d ds\Big) - \mathbb{P}(A^c) - \mathbb{P}(B^c)
    \\ & = \exp\Big(-\frac{\mu_{i}\gamma_d\alpha^d}{d+1}[(\kappa_{i}\delta+\kappa_{i}t)^{d+1}-(\kappa_{i}t)^{d+1}]\Big) - \mathbb{P}(A^c) - \mathbb{P}(B^c)
    \\ & = \exp\Big(-\frac{\gamma_d}{d+1}[(t+\delta)^{d+1}-t^{d+1}]\Big) - \mathbb{P}(A^c) - \mathbb{P}(B^c),
\end{align*}
as claimed.
\end{proof}



In Lemma \ref{nestedballs5} below, we give sufficient conditions for $\mathbb{P}(\sigma_j<\sigma_i^{(2)})\to 1$ for $j>i\geq 1$, which implies that we obtain nested balls, as in Figure 2 of Section 2.2.

\begin{Lem} \label{nestedballs5}
Assume $\mu_1\gg \alpha/N^{(d+1)/d}$, $\mu_2\gg (N\mu_1)^{d+1}/\alpha^d$, and $\mu_j\ll \mu_{j+1}$ for $j\geq 2$.  Then for every $i\geq 1$ and $j>i$, we have 
\begin{equation}\label{nestedeq}
\mathbb{P}(\sigma_j<\sigma_i^{(2)})\to 1.
\end{equation}
\end{Lem}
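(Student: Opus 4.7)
The plan is to prove, by induction on $i$, the strengthened statement that for every $i\geq 1$, $j>i$, and $\eta>0$,
\begin{equation*}
\mathbb{P}(\sigma_j + \kappa_j\eta < \sigma_i^{(2)}) \to 1.
\end{equation*}
The buffer $\kappa_j\eta$ is essential: in the inductive step it is exactly this strengthened statement at level $i-1$, applied with $j$ replaced by $i$ and $\eta$ replaced by $\delta$, that controls the event $B=\{\sigma_i+\kappa_i\delta<\sigma_{i-1}^{(2)}\}$ appearing in Lemma \ref{nestedballs3}. The base case $i=1$ is an easy extension of Lemma \ref{nestedballs2}: that proof already gives $N\mu_1(\sigma_j-\sigma_1)\to_p 0$ for $j\geq 2$, and by (\ref{increasing}) together with the hypothesis $\mu_2\gg(N\mu_1)^{d+1}/\alpha^d$ one has $(\mu_j\alpha^d)^{1/(d+1)}\gg N\mu_1$, so $N\mu_1\kappa_j\to 0$. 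Hence $N\mu_1(\sigma_j-\sigma_1+\kappa_j\eta)\to_p 0$, and comparing with the $\text{Exponential}(1)$ random variable $N\mu_1(\sigma_1^{(2)}-\sigma_1)$ exactly as in Lemma \ref{nestedballs2} yields the base case.

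For the inductive step, fix $i\geq 2$, $j>i$, $\eta>0$, and $\varepsilon>0$. First choose $t$ large enough that $\exp(-\gamma_d t^{d+1}/(d+1))<\varepsilon/3$, and only then choose $\delta>0$ small enough that $\exp(-\gamma_d[(t+\delta)^{d+1}-t^{d+1}]/(d+1))>1-\varepsilon/3$. Lemma \ref{nestedballs0} (with its $j+1$ taken to be $i$) then bounds $\mathbb{P}(A^c)=\mathbb{P}(\sigma_i-\sigma_{i-1}>\kappa_i t)<\varepsilon/3$ for all sufficiently large $N$, while the inductive hypothesis at level $i-1$, applied with the buffer $\delta$, gives $\mathbb{P}(B^c)<\varepsilon/3$ for all sufficiently large $N$. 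Feeding these into Lemma \ref{nestedballs3} produces $\mathbb{P}(\sigma_i^{(2)}-\sigma_i>\kappa_i\delta)\geq 1-\varepsilon$ for all sufficiently large $N$.

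It remains to transfer this bound to the statement about $\sigma_j+\kappa_j\eta$. Because $i\geq 2$ and $\mu_m\ll\mu_{m+1}$ for $m\geq 2$, Lemma \ref{nestedballs1} yields $(\sigma_j-\sigma_i)/\kappa_i\to_p 0$; and since $\mu_j/\mu_i\to\infty$, the identity $\kappa_j/\kappa_i=(\mu_i/\mu_j)^{1/(d+1)}$ shows $\kappa_j/\kappa_i\to 0$. Hence $\mathbb{P}(\sigma_j-\sigma_i+\kappa_j\eta<\kappa_i\delta)>1-\varepsilon/3$ for all large $N$, and the inclusion $\{\sigma_j-\sigma_i+\kappa_j\eta<\kappa_i\delta\}\cap\{\sigma_i^{(2)}-\sigma_i>\kappa_i\delta\}\subseteq\{\sigma_j+\kappa_j\eta<\sigma_i^{(2)}\}$ gives $\mathbb{P}(\sigma_j+\kappa_j\eta<\sigma_i^{(2)})\geq 1-4\varepsilon/3$ for all large $N$; letting $\varepsilon\to 0$ completes the induction. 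The main delicacy is the order of quantifiers: $t$ must be chosen first (to exploit the $N$-free bound from Lemma \ref{nestedballs0}), then $\delta$ small enough to keep the exponential factor in Lemma \ref{nestedballs3} close to $1$, and only then is $N$ taken large; once this is arranged, each ingredient drops into place.
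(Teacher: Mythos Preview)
Your proof is correct and follows essentially the same approach as the paper's. The paper organizes the induction slightly differently---it proves two interleaved auxiliary statements, namely that $\mathbb{P}(\sigma_{i-1}^{(2)}>\sigma_i+\kappa_i\delta)>1-\epsilon$ and that $\mathbb{P}(\sigma_i^{(2)}>\sigma_i+\kappa_i\delta)>1-\epsilon$, and then deduces (\ref{nestedeq}) from the second of these via Lemma~\ref{nestedballs1}---whereas you fold everything into a single strengthened induction hypothesis carrying the buffer $\kappa_j\eta$; but the same lemmas (Lemmas~\ref{nestedballs0}, \ref{nestedballs1}, \ref{nestedballs2}, \ref{nestedballs3}) are invoked at the same places with the same choices of $t$ and $\delta$, and the logic is identical.
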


\begin{proof}
The result (\ref{nestedeq}) when $i = 1$ was proved in part 2 of Lemma \ref{nestedballs2}.  To establish the result when $i \geq 2$, we will show that for all $i \geq 2$ and all $\epsilon > 0$, there exists $\delta > 0$ such that for sufficiently large $N$, we have
\begin{equation}\label{i2}
\mathbb{P}(\sigma_{i-1}^{(2)} > \sigma_{i} + \kappa_{i} \delta) > 1 - \epsilon
\end{equation}
and
\begin{equation}\label{i12}
\mathbb{P}(\sigma_{i}^{(2)} > \sigma_{i} + \kappa_{i} \delta) > 1 - \epsilon.
\end{equation}
Assume for now that $i \geq 2$ and that (\ref{i2}) and (\ref{i12}) hold.  Let $\eps > 0$, and choose $\delta > 0$ to satisfy (\ref{i12}).  Lemma \ref{nestedballs1} implies that if $j > i$, then $\mathbb{P}(\sigma_j-\sigma_i < \kappa_i \delta) > 1 - \epsilon$ for sufficiently large $N$.  It follows that $$\mathbb{P}(\sigma_j<\sigma_i^{(2)}) \geq \mathbb{P}(\sigma_i^{(2)} > \sigma_i + \kappa_i \delta > \sigma_j) >1-2\epsilon$$ for sufficiently large $N$, which implies (\ref{nestedeq}).

It remains to prove (\ref{i2}) and (\ref{i12}).  We will proceed by induction.  The result (\ref{i2}) when $i = 2$ is part 1 of Lemma \ref{nestedballs2}.  Therefore, it suffices to show that (\ref{i2}) implies (\ref{i12}), and that if (\ref{i12}) holds for some $i \geq 2$, then (\ref{i2}) holds with $i+1$ in place of $i$.

To deduce (\ref{i12}) from (\ref{i2}), we first let $\eps > 0$ and use Lemma \ref{nestedballs0} to choose $t > 0$ large enough that 
$$\mathbb{P}(\sigma_i-\sigma_{i-1}>\kappa_{i}t)\leq\exp\Big(-\frac{\gamma_d}{d+1}t^{d+1}\Big)<\frac{\epsilon}{3}.$$
Then choose $\delta > 0$ small enough that (\ref{i2}) holds with $\epsilon/3$ in place of $\epsilon$ for sufficiently large $N$ and
$$\exp \Big( - \frac{\gamma_d}{d+1}[(t+\delta)^{d+1} - t^{d+1}] \Big) > 1 - \frac{\epsilon}{3}.$$
It now follows from Lemma \ref{nestedballs3} that for sufficiently large $N$,
$$\mathbb{P}(\sigma_{i}^{(2)} > \sigma_{i} + \kappa_{i} \delta) > 1 - \frac{\epsilon}{3} - \frac{\epsilon}{3} - \frac{\epsilon}{3} = 1 - \eps,$$ so (\ref{i12}) holds.

Next, suppose (\ref{i12}) holds for some $i \geq 2$.  Let $\eps > 0$.  By (\ref{i12}), there exists $\delta > 0$ such that for sufficiently large $N$, we have $$\mathbb{P}(\sigma_{i}^{(2)} > \sigma_{i} + \kappa_{i} \delta) > 1 - \frac{\epsilon}{2}.$$
By Lemma \ref{nestedballs1} and the fact that $\mu_i\ll \mu_{i+1}$, we have $(\sigma_{i+1}-\sigma_i)/\kappa_i+\delta(\kappa_{i+1}/\kappa_i)\to_p 0$. Thus, for sufficiently large $N$,
$$\mathbb{P}\Big(\frac{\sigma_{i+1}-\sigma_i}{\kappa_i}+\frac{\kappa_{i+1} \delta}{\kappa_i}<\delta\Big)>1-\frac{\epsilon}{2}$$
and therefore
$$\mathbb{P}(\sigma_i^{(2)}>\sigma_{i+1}+\kappa_{i+1}\delta)\geq \mathbb{P}(\sigma_i^{(2)}-\sigma_i > \kappa_i \delta> \sigma_{i+1}+\kappa_{i+1} \delta-\sigma_i)>1-\epsilon,$$
which is (\ref{i2}) with $i+1$ in place of $i$.
\end{proof}




We now find the limiting distribution of distances between mutations of consecutive types.

\begin{Lem}
\label{pretheorem5}
Suppose $\mu_1\gg\alpha/N^{(d+1)/d}$, $\mu_2\gg (N\mu_1)^{d+1}/\alpha^d$, and $\mu_j\ll \mu_{j+1}$ for $j\geq 2$.  Then for all $s>0$,
\begin{equation}\label{Dj1}
\mathbb{P}\bigg(\frac{D_{j+1}}{\alpha\kappa_{j+1}}\leq s\bigg)\to \int_{0}^{\infty}\gamma_d(t \wedge s)^d\exp\Big(-\frac{\gamma_dt^{d+1}}{d+1}\Big)dt.
\end{equation}
\end{Lem}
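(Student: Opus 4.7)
\textit{Plan.} I would exploit the nested-ball picture: by Lemma~\ref{nestedballs5}, the event $G_N := \{\sigma_{j+1}<\sigma_j^{(2)}\}$ has $\mathbb{P}(G_N)\to 1$, and on $G_N$ the region $\psi_j(t)$ during $[\sigma_j,\sigma_{j+1}]$ equals the single growing ball $B(x_j,\alpha(t-\sigma_j))$, where $x_j$ denotes the location of the first type $j$ mutation. An argument paralleling Lemma~\ref{lemmathm3} applied to the single rate $\mu_{j+1}$ (using $\mu_{j+1}\geq \mu_1\gg \alpha/N^{(d+1)/d}$ from (\ref{increasing})) gives $\kappa_{j+1}\ll N^{1/d}/\alpha$, so for any fixed $R$ this ball does not wrap around the torus before time $\sigma_j+\kappa_{j+1}R$ when $N$ is large. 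Combined with Lemma~\ref{nestedballs0} applied with $i=j+1$, which gives tightness of $(\sigma_{j+1}-\sigma_j)/\kappa_{j+1}$, the wrap-around regime can be ignored by sending $R\to\infty$ at the end.

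On $G_N$ and before any wrap-around, the first type $j+1$ mutation is the first point of $\Pi_{j+1}$ landing in the growing ball around $x_j$, which is determined by $\mathcal{G}:=\sigma(\Pi_1,\ldots,\Pi_j)$. Since $\Pi_{j+1}$ is independent of $\mathcal{G}$ and the ball has volume $\gamma_d(\alpha t)^d$ at time $\sigma_j+t$, a standard Poisson computation gives
$$\mathbb{P}(\sigma_{j+1}-\sigma_j>\kappa_{j+1}r\mid \mathcal{G})\longrightarrow \exp\Big(-\int_0^{\kappa_{j+1}r}\mu_{j+1}\gamma_d\alpha^d u^d\,du\Big)=\exp\Big(-\frac{\gamma_d r^{d+1}}{d+1}\Big),$$
where the final equality uses $\mu_{j+1}\alpha^d\kappa_{j+1}^{d+1}=1$ from (\ref{betakay}). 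Conditional on $\sigma_{j+1}-\sigma_j=\kappa_{j+1}t$, the location $x_{j+1}$ is uniformly distributed on $B(x_j,\alpha\kappa_{j+1}t)$ by the classical fact that the first point of a Poisson point process is uniform on its support at the arrival time; hence $\mathbb{P}(D_{j+1}/(\alpha\kappa_{j+1})\leq s\mid \sigma_{j+1}-\sigma_j=\kappa_{j+1}t)=(s\wedge t)^d/t^d$. Multiplying by the density $\gamma_d t^d\exp(-\gamma_d t^{d+1}/(d+1))$ of the limit of $(\sigma_{j+1}-\sigma_j)/\kappa_{j+1}$ and integrating over $t\in[0,\infty)$ yields the expression in (\ref{Dj1}), since $(s\wedge t)^d/t^d\cdot \gamma_d t^d=\gamma_d(s\wedge t)^d$.

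The main obstacle is that $G_N$ depends on $\Pi_{j+1}$ and is therefore not $\mathcal{G}$-measurable, so one cannot literally condition on $G_N$ inside a $\Pi_{j+1}$-computation. My workaround is to introduce surrogate variables $(\tilde\sigma_{j+1},\tilde x_{j+1})$ defined as the time and location of the first point of $\Pi_{j+1}$ in the hypothetical ball around $x_j$ grown from $x_j$ alone (with a wrap-around cutoff at $\sigma_j+L/(2\alpha)$); compute their joint law exactly via the Poisson properties above, which is clean because the hypothetical ball is $\mathcal{G}$-measurable; and then argue that $(\tilde\sigma_{j+1},\tilde x_{j+1})=(\sigma_{j+1},x_{j+1})$ on $G_N$ intersected with $\{(\sigma_{j+1}-\sigma_j)/\kappa_{j+1}\leq R\}$ for $\kappa_{j+1}R<L/(2\alpha)$. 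Tightness from Lemma~\ref{nestedballs0} lets us send $R\to\infty$ at the end to complete the proof.
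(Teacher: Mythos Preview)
Your proposal is correct and follows essentially the same approach as the paper: both reduce to the single-ball picture via Lemma~\ref{nestedballs5}, perform the exact Poisson computation for the first point of $\Pi_{j+1}$ in that ball (yielding the density $\gamma_d t^d\exp(-\gamma_d t^{d+1}/(d+1))$ and the uniform conditional location), and handle wrap-around with a large-time cutoff sent to infinity at the end. The paper packages the surrogate idea as a ``modified process'' in which only the first mutation of each type $i\le j$ is permitted (and accordingly works on the slightly smaller event $A=\bigcap_{i=1}^{j}\{\sigma_{j+1}<\sigma_i^{(2)}\}$), whereas you construct $(\tilde\sigma_{j+1},\tilde x_{j+1})$ directly from the single ball around $x_j$ and need only $G_N=\{\sigma_{j+1}<\sigma_j^{(2)}\}$; these are equivalent formulations of the same argument.
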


\begin{proof}
Define the event
$$A:=\bigcap_{i=1}^{j}\{\sigma_{j+1}<\sigma_i^{(2)}\}.$$
On the event $A$, the first type $j+1$ mutation appears before the second mutation of any type $i \in \{1, \dots, j\}$.  By Lemma \ref{nestedballs5}, we have $\mathbb{P}(A)\to 1$.  As a result, it will be sufficient for us to consider a modified version of our process in which, for $i \in \{1, \dots, j\}$, only the first type $i$ mutation is permitted to occur.  Note that this modified process can be constructed from the same sequence of independent Poisson process $(\Pi_i)_{i=1}^{\infty}$ as the original process.  However, in the modified process, all points of $\Pi_i$ after time $\sigma_i$ are disregarded.  On the event $A$, the first $j+1$ mutations will occur at exactly the same times and locations in the original process as in the modified process.  Therefore, because $\mathbb{P}(A) \rightarrow 1$, it suffices to prove (\ref{Dj1}) for this modified process.  For the rest of the proof, we will work with this modified process, which makes exact calculations possible.

Let $K\in (s,\infty)$ be a constant which does not depend on $N$.  Our assumptions imply that $\mu_{j+1}\gg\mu_1\gg\alpha/N^{(d+1)/d}$. Thus, there is an $N_K$ such that for $N \geq N_K$ we have $\kappa_{j+1}t< L/(2\alpha)$ for all $t\in[0,K]$.  It follows that $Y_{j}(s)=\gamma_d\alpha^d(s-\sigma_j)^d$ for $s\in [\sigma_j,\sigma_j+\kappa_{j+1}K]$.  Therefore, reasoning as in the proof of Lemma \ref{nestedballs0}, we get
\begin{equation}\label{from7}
\mathbb{P}(\sigma_{j+1}-\sigma_j>\kappa_{j+1}t)=\exp\Big(-\frac{\gamma_d}{d+1}t^{d+1}\Big).
\end{equation}
It follows that for $N \geq N_K$, the probability density of $(\sigma_{j+1}-\sigma_j)/\kappa_{j+1}$ restricted to $[0, K]$ is
$$f(t):=\gamma_d t^d\exp\Big(-\frac{\gamma_d}{d+1}t^{d+1}\Big).$$
For $N \geq N_K$ and $t \in [0,K]$, conditional on the event $\{\sigma_{j+1}-\sigma_j=\kappa_{j+1}t\}$, the location of the first type $j+1$ mutation is a uniformly random point on a $d$-dimensional ball of radius $\alpha\kappa_{j+1}t$, which means
$$\mathbb{P}\bigg(\frac{D_{j+1}}{\alpha\kappa_{j+1}}\leq s\Big|\frac{\sigma_{j+1}-\sigma_j}{\kappa_{j+1}}=t \bigg)=\mathbf{1}_{\{s>t\}}+\frac{\gamma_d(\alpha\kappa_{j+1}s)^d}{\gamma_d(\alpha\kappa_{j+1}t)^d}\mathbf{1}_{\{s\leq t\}}=\mathbf{1}_{\{s>t\}}+\frac{s^d}{t^d}\mathbf{1}_{\{s\leq t\}}.$$
It follows that for $N \geq N_K$,
\begin{align*}
\mathbb{P}\bigg(\frac{D_{j+1}}{\alpha\kappa_{j+1}}\leq s \bigg) &= \int_0^K f(t) \bigg( \mathbf{1}_{\{s>t\}}+\frac{s^d}{t^d}\mathbf{1}_{\{s\leq t\}} \bigg) \: dt + \mathbb{P} \bigg( \frac{D_{j+1}}{\alpha\kappa_{j+1}}\leq s, \: \frac{\sigma_{j+1}-\sigma_j}{\kappa_{j+1}} > K \bigg) \\
&= \int_0^K \gamma_d(t \wedge s)^d\exp\Big(-\frac{\gamma_dt^{d+1}}{d+1}\Big) \: dt + \mathbb{P} \bigg( \frac{D_{j+1}}{\alpha\kappa_{j+1}}\leq s, \: \frac{\sigma_{j+1}-\sigma_j}{\kappa_{j+1}} > K \bigg).
\end{align*}
Because (\ref{from7}) implies that $$\lim_{K \rightarrow \infty} \lim_{N \rightarrow \infty} \mathbb{P} \bigg( \frac{D_{j+1}}{\alpha\kappa_{j+1}}\leq s, \: \frac{\sigma_{j+1}-\sigma_j}{\kappa_{j+1}} > K \bigg) = 0,$$
the result (\ref{Dj1}) follows by letting $N \rightarrow \infty$ and then $K \rightarrow \infty$.
\end{proof}

\noindent \textit{Proof of Theorem \ref{theorem5}.} Lemma \ref{pretheorem5} proves the case when $k=j+1$, so assume that $k\geq j+2$.
The triangle inequality implies that 
$$D_{j+1}-(D_{j+2}+\cdots+D_{k})\leq D_{j,k}\leq D_{j+1}+(D_{j+2}+\cdots+D_{k}).$$
Supppose $j+2 \leq i \leq k$.  We know from Lemma \ref{pretheorem5} that $D_i/(\alpha \kappa_i)$ converges in distribution to a nondegenerate random variable as $N \rightarrow \infty$.  Because 
$\kappa_{j+1}/\kappa_i \rightarrow \infty$ by the assumption (\ref{strincreasing0}), it follows that $D_i/(\alpha \kappa_{j+1}) \to_p 0$.
Therefore, $(D_{j+2}+\cdots +D_{k})/(\alpha\kappa_{j+1})\to_p 0$. Thus, Theorem \ref{theorem5} follows from Lemma \ref{pretheorem5} and Slutsky's Theorem. \QED{}

\subsection{Proof of Theorem 6}
\indent Having found a limiting distribution for distances between mutations in the setting of Theorem~2, we now prove a similar result in the setting of Theorem 4, where once the first type $l$ mutation appears, all subsequent mutations appear in nested balls.

We begin with a result that bounds $\sigma_l^{(2)}-\sigma_l$ away from zero with high probability, on the time scale $\beta_l$.

\begin{Lem}
\label{nestedballs9} Assume the same hypotheses as Theorem \ref{theorem4}. Then, for all $\epsilon>0$, there is $r>0$ so that 
$$\liminf_{N\to\infty}\mathbb{P}(\sigma_l^{(2)}-\sigma_l>\beta_l r)>1-\epsilon.$$
\end{Lem}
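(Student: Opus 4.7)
The plan is to adapt the expectation identity of Lemma~\ref{computeexpectation} to $\sigma_l^{(2)}$ and then bound the resulting integrand using estimates that were already established in the proof of Theorem~\ref{theorem3} (which is available at index $l$ by the definition~\eqref{definel}).

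First, by the same Poisson-set argument as in Lemma~\ref{computeexpectation}, but invoking the strong Markov property for the Poisson process $\Pi_l$ at the stopping time $\sigma_l$ together with the fact that $(Y_{l-1}(s))_{s\geq 0}$ is $\sigma(\Pi_1,\ldots,\Pi_{l-1})$-measurable and therefore independent of the restriction of $\Pi_l$ to times exceeding $\sigma_l$, I would establish
\[
\mathbb{P}(\sigma_l^{(2)}-\sigma_l>\beta_l r)=\mathbb{E}\Bigg[\exp\Big(-\int_{\sigma_l}^{\sigma_l+\beta_l r}\mu_l Y_{l-1}(s)\,ds\Big)\Bigg].
\]

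Next, I would combine this identity with two by-products of the proof of Theorem~\ref{theorem3}. Setting $g_l(u)=\gamma_d^{l-1}(d!)^{l-1}u^{(l-1)(d+1)}/((l-1)(d+1))!$ and $\nu_N=1/(\beta_l\mu_l)$, these are: (a) Theorem~\ref{theorem3} itself, which allows us, for each fixed $\epsilon>0$, to choose $T>1$ large enough that $\liminf_N\mathbb{P}(\sigma_l\leq\beta_l T)>1-\epsilon/3$; and (b) the statement that $\mathbb{P}(B_N^{l-1}(\delta,1,g_l,\nu_N))\to 1$ for every $\delta>0$, which is exactly what Lemma~\ref{approxbydeterministic} yields in the course of proving Theorem~\ref{theorem3}.

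To conclude, given $\epsilon>0$ and $T$ as above, pick $\delta\in(0,1)$ with $\delta^{-1}\geq T+1$, and then pick $r\in(0,1]$ small enough that $2g_l(T+1)r<\epsilon/3$. On the event $B_N^{l-1}(\delta,1,g_l,\nu_N)\cap\{\sigma_l\leq\beta_l T\}$, the monotonicity of $Y_{l-1}$ gives, for every $s\in[\sigma_l,\sigma_l+\beta_l r]$,
\[
Y_{l-1}(s)\leq Y_{l-1}(\beta_l(T+r))\leq \frac{2g_l(T+r)}{\beta_l\mu_l}\leq\frac{2g_l(T+1)}{\beta_l\mu_l},
\]
where the middle inequality uses $T+r\in[\delta,\delta^{-1}]$ and the defining property of $B_N^{l-1}$ with $\epsilon_1=1$. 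Hence the integral in the identity is at most $2g_l(T+1)r<\epsilon/3$, so the integrand there is at least $e^{-\epsilon/3}\geq 1-\epsilon/3$ on this event, whose liminf-probability is at least $1-\epsilon/3$. Therefore $\liminf_N\mathbb{P}(\sigma_l^{(2)}-\sigma_l>\beta_l r)\geq(1-\epsilon/3)^2>1-\epsilon$, which is the desired bound.

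The main subtlety is that the event $B_N^{l-1}$ only controls $Y_{l-1}(\beta_l u)$ for $u\in[\delta,\delta^{-1}]$, whereas $\sigma_l/\beta_l$ could in principle be much smaller than $\delta$. This is precisely what forces us to use the monotonicity of $Y_{l-1}$ in order to replace the pointwise values $Y_{l-1}(s)$ on $[\sigma_l,\sigma_l+\beta_l r]$ by the single upper bound at the endpoint $\beta_l(T+r)$, which does lie in the interval $[\beta_l\delta,\beta_l\delta^{-1}]$ where $B_N^{l-1}$ is effective.
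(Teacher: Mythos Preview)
Your proposal is correct and follows essentially the same approach as the paper: both arguments write $\mathbb{P}(\sigma_l^{(2)}-\sigma_l>\beta_l r)$ as the expectation of $\exp(-\int_{\sigma_l}^{\sigma_l+\beta_l r}\mu_l Y_{l-1}(s)\,ds)$, restrict to the event $\{\sigma_l\leq\beta_l T\}\cap B_N^{l-1}$, and use monotonicity of $Y_{l-1}$ together with the upper bound from $B_N^{l-1}$ to make the integral small. The only cosmetic difference is that the paper shifts the integration interval to $[\beta_l T,\beta_l(T+r)]$ before applying the bound, whereas you dominate the integrand by the single endpoint value $Y_{l-1}(\beta_l(T+r))$ and multiply by the interval length; both are equally valid.
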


\begin{proof}
Let $\epsilon>0$. Using Theorem \ref{theorem3}, choose a large $t>0$ so that
\begin{align}
    \label{nb9.0}
    \lim_{N\to\infty}\mathbb{P}(\sigma_l\leq \beta_l t)>1-\frac{\epsilon}{2}.
\end{align}
Now set, as in the proof of Theorem \ref{theorem3}, 
$$g(s):=\frac{\gamma_d^{l-1}(d!)^{l-1}s^{(l-1)(d+1)}}{((l-1)(d+1))!}.$$
It is clear that we can choose a small $r>0$ so that
\begin{align}
    \label{nb9.05}
    \exp\Big(-\int_t^{t+r}g(s)ds\Big)>1-\frac{\epsilon}{2}.
\end{align}
Having chosen $t>0$ and $r>0$, choose $\delta>0$ so that $[t,t+r]\subseteq [\delta,\delta^{-1}]$. Then for any $\lambda>0$, define, as in Lemma \ref{theorem10tool}, the event
$$B:=B_N^{l-1}\Big(\delta,\lambda,g,\frac{1}{\mu_l\beta_l}\Big)=\bigg\{\frac{g(u)(1-\lambda)}{\beta_l\mu_l}\leq Y_{l-1}(\beta_lu)\leq \frac{g(u)(1+\lambda)}{\beta_l\mu_l},\text{ for all }u\in[\delta,\delta^{-1}]\bigg\}.$$
Now we calculate
\begin{align}
    \label{nb9.1}
    \mathbb{P}(\sigma_l^{(2)}-\sigma_l>\beta_l r) & = \mathbb{E}\bigg[\exp\Big(-\int_{\sigma_l}^{\sigma_l+\beta_l r}\mu_lY_{l-1}(s)ds\Big)\bigg] \nonumber
    \\ & \geq \mathbb{E}\bigg[\exp\Big(-\int_{\sigma_l}^{\sigma_l+\beta_l r}\mu_lY_{l-1}(s)ds\Big)\mathbf{1}_{\{\sigma_l\leq \beta_l t\}}\mathbf{1}_{B}\bigg].
\end{align}
Because $Y_{l-1}(s)$ is monotone increasing in $s$, on the event $\{\sigma_l\leq \beta_l t\}\cap B$ we have
$$\int_{\sigma_l}^{\sigma_l+\beta_l r}\mu_lY_{l-1}(s)ds\leq \int_{\beta_l t}^{\beta_l(t+r)}\mu_l Y_{l-1}(s) ds\leq (1+\lambda) \int_{t}^{t+r}g(s)ds.$$
Using the above and (\ref{nb9.1}), we have
$$\mathbb{P}(\sigma_l^{(2)}-\sigma_l>\beta_l r)\geq \exp\bigg(-(1+\lambda)\int_t^{t+r}g(s)ds\bigg)\mathbb{P}(\{\sigma_l\leq \beta_l t\} \cap B)$$
Now take $N\to\infty$. Using that $\mathbb{P}(B)\to 1$ as shown in the proof of Theorem \ref{theorem3}, and using (\ref{nb9.0}), we have
\begin{align*}
    \liminf_{N\to\infty}\mathbb{P}(\sigma_l^{(2)}-\sigma_l>\beta_l r)&\geq\exp\bigg(-(1+\lambda)\int_t^{t+r}g(s)ds\bigg)\cdot \liminf_{N\to\infty}\mathbb{P}(\sigma_l\leq \beta_l t)
    \\ & > \exp\bigg(-(1+\lambda)\int_t^{t+r}g(s)ds\bigg)\Big(1-\frac{\epsilon}{2}\Big).
\end{align*}
Since $\lambda>0$ is arbitrary, (\ref{nb9.05}) implies $\liminf_{N\to\infty}\mathbb{P}(\sigma_l^{(2)}-\sigma_l>\beta_l r)> (1-\epsilon/2)^2 > 1-\epsilon$, finishing the proof.  
\end{proof}

Using Lemma \ref{nestedballs9}, we prove an analog of Lemma \ref{nestedballs2} in the setting of Theorem 4.
\begin{Lem}
\label{nestedballs10}
Assume the same hypotheses as Theorem 4. Then
\begin{enumerate}
    \item For all $t>0$, we have $\mathbb{P}(\sigma_{l+1}+\kappa_{l+1}t<\sigma_{l}^{(2)})\to 1.$
    \item For every $k\geq l+1$, we have 
$\mathbb{P}(\sigma_{k}<\sigma_l^{(2)})\to 1.$
\end{enumerate}
\end{Lem}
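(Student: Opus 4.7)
The plan is to prove both parts by combining Lemma \ref{nestedballs9}---which provides, for any $\epsilon > 0$, some $r > 0$ with $\liminf_{N\to\infty} \mathbb{P}(\sigma_l^{(2)} - \sigma_l > \beta_l r) > 1 - \epsilon$---with an upper bound showing that the other quantity ($\sigma_{l+1} + \kappa_{l+1}t - \sigma_l$ in Part 1, or $\sigma_k - \sigma_l$ in Part 2) is $o_p(\beta_l)$. Once we have both ingredients, a union bound gives the nesting event with probability tending to $1$. Note that Part 2 in the case $k = l+1$ is immediate from Part 1 since $\sigma_{l+1} < \sigma_{l+1} + \kappa_{l+1}t$, so the substantive content is Part 1 for general $t > 0$ and Part 2 for $k \geq l+2$.

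For Part 1, the first step is to observe that the assumption $\mu_{l+1} \gg 1/(\alpha^d \beta_l^{d+1})$ of Theorem \ref{theorem4} rearranges (using the definition of $\kappa_{l+1}$ in (\ref{betakay})) to $\kappa_{l+1}/\beta_l \to 0$, so $\kappa_{l+1}t/\beta_l \to 0$ for each fixed $t$. Next, the lower bound in equation (\ref{newmuY}) of the proof of Theorem \ref{theorem4}, applied with $j = l$, together with the dominated convergence argument used there, yields $(\sigma_{l+1} - \sigma_l)/\beta_l \to_p 0$. Given $\epsilon > 0$, I would then choose $r$ via Lemma \ref{nestedballs9} so that $\liminf_{N\to\infty} \mathbb{P}(\sigma_l^{(2)} - \sigma_l > \beta_l r) > 1 - \epsilon/2$, and take $N$ large enough that $\mathbb{P}((\sigma_{l+1} - \sigma_l)/\beta_l + \kappa_{l+1}t/\beta_l < r) > 1 - \epsilon/2$. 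A union bound on the two complement events gives $\mathbb{P}(\sigma_{l+1} + \kappa_{l+1}t < \sigma_l^{(2)}) > 1 - \epsilon$ for sufficiently large $N$, from which the claim follows since $\epsilon$ was arbitrary.

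For Part 2 with $k \geq l+2$, the argument is identical except that $(\sigma_{l+1} - \sigma_l)/\beta_l + \kappa_{l+1}t/\beta_l$ is replaced by $(\sigma_k - \sigma_l)/\beta_l$. The necessary input $(\sigma_k - \sigma_l)/\beta_l \to_p 0$ is precisely (\ref{sigmaconvprob}) from the proof of Theorem \ref{theorem4}, and that derivation uses only (\ref{increasing}) together with $\mu_{l+1} \gg 1/(\alpha^d \beta_l^{d+1})$, so it is valid for every $k \geq l+1$ under the current hypotheses.

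I do not anticipate a serious technical obstacle here: the deterministic approximation that underlies Lemma \ref{nestedballs9} and the rapid-mutation estimates in the proof of Theorem \ref{theorem4} already supply the two one-sided bounds we need on the $\beta_l$ scale. The only care required is the scaling identity $\kappa_{l+1} \ll \beta_l$ extracted from the hypotheses of Theorem \ref{theorem4}, together with a clean union-bound argument to combine the probabilistic estimates; everything else is bookkeeping.
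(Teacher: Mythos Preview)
Your proposal is correct and follows essentially the same approach as the paper: both combine Lemma~\ref{nestedballs9} (to bound $\sigma_l^{(2)}-\sigma_l$ below on the $\beta_l$ scale) with the convergence $(\sigma_k-\sigma_l)/\beta_l\to_p 0$ from (\ref{sigmaconvprob}) and the scaling identity $\kappa_{l+1}\ll\beta_l$, then conclude via a union bound. The only cosmetic differences are that you split $\epsilon$ into halves rather than accepting a $2\epsilon$ bound, and you cite (\ref{newmuY}) plus the dominated convergence step explicitly for the $j=l$ case whereas the paper simply invokes (\ref{sigmaconvprob}).
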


\begin{proof}
Let $\epsilon>0$. Lemma \ref{nestedballs9} implies that there is $r>0$ and such that for sufficiently large $N$,
\begin{equation}\label{141}
\mathbb{P}(\sigma_l^{(2)}-\sigma_l>\beta_lr)>1-\epsilon.
\end{equation}
Now note that $(\sigma_{l+1}-\sigma_l)/\beta_l\to_p 0$ by (\ref{sigmaconvprob}) in the proof of Theorem \ref{theorem4}. Also, our assumption that $\mu_{l+1}\gg 1/(\alpha^d\beta_l^{d+1})$ is equivalent to $\kappa_{l+1}\ll \beta_l$. It follows that for sufficiently large $N$,
\begin{equation}\label{142}
\mathbb{P}((\sigma_{l+1}-\sigma_l+\kappa_{l+1}t)/\beta_l<r)>1-\epsilon.
\end{equation}
The estimates (\ref{141}) and (\ref{142}) imply that $\mathbb{P}(\sigma_{l+1}+\kappa_{l+1}t<\sigma_l^{(2)})>1-2\epsilon$ for sufficiently large $N$.  This proves the first statement. The second statement is proven similarly, using instead that $(\sigma_k-\sigma_l)/\beta_l\to_p 0$ by (\ref{sigmaconvprob}). 
\end{proof}

At this point, we have proven that for $k>l$, the first type $k$ mutation occurs before $\sigma_l^{(2)}$ with probability tending to $1$ as $N\to \infty$. This implies that in the setting of Theorem 4, we can disregard the type $1,...,l-1$ mutations and regard the first type $l$ mutation as the first type~$1$ mutation, and then prove Theorem \ref{theorem6} by following the same argument used to prove Theorem \ref{theorem5}.
\\~\\
\textit{Proof of Theorem 6.}
Relabel the type $l,l+1,l+2,...$ mutations as type $1,2,3,...$ mutations, and repeat the arguments in Lemmas \ref{nestedballs0}-\ref{pretheorem5} and in the proof of Theorem \ref{theorem5}. The only difference is that we have to apply Lemma \ref{nestedballs10} instead of Lemma \ref{nestedballs2}. Note that type $l$ mutations do not appear at the same rate as type $1$ mutations, so we needed a different technique to establish $\mathbb{P}(\sigma_j<\sigma_l^{(2)})\to 1$ for $j>l$. \QED{}  
\\~\\
\textbf{Acknowledgments.} The authors thank Jasmine Foo for suggesting the problem of looking at the case of increasing mutation rates, and bringing to their attention the references \cite{ll00, pfl10}.  JS was supported in part by NSF Grant DMS-1707953.


\begin{thebibliography}{99}
\bibitem{armdoll} P. Armitage and R. Doll (1954).  The age distribution of cancer and a multi-stage theory of carcinogenesis. {\it Brit. J. Cancer} {\bf 8}, 1-12.

\bibitem{bfs08} A. Borodin, P. L. Ferrari, and T. Sasamoto (2008).  Large time asymptotics of growth models on space-like paths II: PNG and parallel TASEP.  {\it Commun. Math. Phys.} {\bf 283}, 417-449.

\bibitem{bg1} M. Bramson and D. Griffeath (1980).  On the Williams-Bjerknes tumour growth model: II.  {\it Math. Proc. Camb. Phil. Soc.} {\bf 88}, 339-357.

\bibitem{bg2} M. Bramson and D. Griffeath (1981).  On the Williams-Bjerknes tumour growth model: I.  {\it Ann. Probab.} {\bf 9}, 173-185.

\bibitem{dfl16} R. Durrett, J. Foo, and K. Leder (2016).  Spatial Moran models II: cancer initiation in spatially structured tissue.  {\it J. Math. Biol.} {\bf 72}, 1369-1400.

\bibitem{dm11} R. Durrett and J. Mayberry (2011).  Traveling waves of selective sweeps.  {\it Ann. Appl. Probab.} {\bf 21}, 699-744.

\bibitem{dm10} R. Durrett and S. Moseley (2010).  Evolution of resistance and progression to disease during clonal expansion of cancer.  {\it Theor. Pop. Biol.} {\bf 77}, 42-48.

\bibitem{dm15} R. Durrett and S. Moseley (2015).  Spatial Moran models I: stochastic tunneling in the neutral case.  {\it Ann. Appl. Probab.} {\bf 25}, 104-115.

\bibitem{dss} R. Durrett, D. Schmidt, and J. Schweinsberg (2009).  A waiting time problem arising from the study of multi-stage carcinogenesis.  {\it Ann. Appl. Probab.} {\bf 19} (2009), 676-718.

\bibitem{fls} J. Foo, K. Leder, and J. Schweinsberg (2020).  Mutation timing in a spatial model of evolution.  {\it Stochastic Process. Appl.} {\bf 130}, 6388-6413.

\bibitem{flr} J. Foo, K. Leder, and M. D. Ryser (2014). Multifocality and recurrence risk: a quantitative model of field cancerization. {\it J. Theor. Biol.} {\bf 355}, 170-184. 

\bibitem{imkn} Y. Iwasa, F. Michor, N. L. Komarova, and M. A. Nowak (2005).  Population genetics of tumor suppressor genes.  {\it J. Theor. Biol.} {\bf 233}, 15-23.

\bibitem{imn} Y. Iwasa, F. Michor, and M. A. Nowak (2004).  Stochastic tunnels in evolutionary dynamics.  {\it Genetics} {\bf 166}, 1571-1579.

\bibitem{k06} N. L. Komarova (2006).  Spatial stochastic models for cancer initiation and progression.  {\it Bull. Math. Biol.} {\bf 68}, 1573-1599.

\bibitem{ksn} N. L. Komarova, A. Sengupta, and M. A. Nowak (2003).  Mutation-selection networks of cancer initiation: tumor suppressor genes and chromosomal instability.  {\it J. Theor. Biol.} {\bf 223}, 433-450.

\bibitem{ll00}
K. R. Loeb and L. A. Loeb (2000). Significance of multiple mutations in cancer. {\it Carcinogenesis} {\bf 21}, 379-385.

\bibitem{ml90}S. H. Moolgavkar and G. Luebeck (1990).  Two-event model for carcinogenesis: biological, mathematical, and statistical considerations.  {\it Risk Analysis} {\bf 10}, 323-341.

\bibitem{ml92}S. H. Moolgavkar and E. G. Luebeck (1992).  Multistage carcinogenesis: population-based model for colon cancer.  {\it J. Natl. Cancer Inst.} {\bf 18}, 610-618.

\bibitem{ps00a} M. Pr\"ahofer and H. Spohn (2000).  Statistical self-similarity of one-dimensional growth processes.  {\it Physica A} {\bf 279}, 342-352.

\bibitem{ps00b} M. Pr\"ahofer and H. Spohn (2000).  Universal distributions for growth processes in $1 + 1$ dimensions and random matrices.  {\it Phys. Rev. Lett.} {\bf 84}, 4882-4885.

\bibitem{ps02} M. Pr\"ahofer and H. Spohn (2002).  Scale Invariance of the PNG Droplet and the Airy
Process.  {\it J. Stat. Phys.} {\bf 108}, 1071-1106.

\bibitem{pfl10}
M. J. Prindle, E. J. Fox, and L. A. Loeb (2010). The mutator phenotype in cancer: molecular mechanisms and targeting strategies. {\it Curr. Drug Targets} {\bf 11}, 1296-1303.

\bibitem{rlrlf16}
M. D. Ryser, W. T. Lee, N. E. Ready, K. Z. Leder, and J. Foo (2016).  Quantifying the dynamics of field cancerization in tobacco-related head and neck cancer: a multiscale modeling approach.  {\it Cancer Res.} {\bf 76}, 7078-7088.

\bibitem{sch08}
J. Schweinsberg (2008).  Waiting for $m$ mutations.  {\it Electron. J. Probab.} {\bf 13}, 1442-1478.

\bibitem{wb72} T. Williams and R. Bjerknes (1972). Stochastic model for abnormal clone spread through epithelial basal layer. {\it Nature} {\bf 236}, 19-21.
\end{thebibliography}
\end{document}